\documentclass[11pt,a4paper]{article}
\usepackage{amsfonts,amsgen,amsmath,amstext,amsbsy,amsopn,amsthm,amsfonts,amssymb,amscd}
%,lineno
%\usepackage[notref,notcite]{showkeys}
\usepackage{epsf,epsfig}
\usepackage{float}
\usepackage{ebezier,eepic}
\usepackage{color}
\usepackage{tikz}
\usepackage{multirow}
\usepackage{graphicx}
\usepackage{subfigure}
\setlength{\textwidth}{150mm} \setlength{\oddsidemargin}{7mm}
\setlength{\evensidemargin}{7mm} \setlength{\topmargin}{-5mm}
\setlength{\textheight}{245mm} \topmargin -18mm

\newtheorem{thm}{Theorem}[section]

\newtheorem{prop}[thm]{Proposition}
\newtheorem{lem}[thm]{Lemma}
\newtheorem{false statement}{False statement}

\theoremstyle{definition}
\newtheorem{defn}[thm]{Definition}
\newtheorem{claim}{Claim}

\newtheorem{conj}[thm]{Conjecture}

\makeatletter \@addtoreset{equation}{section}

\baselineskip 15pt

\title{\bf\Large A Stability Theorem for Maximal $C_{2k+1}$-free Graphs}

\author{
Jian Wang\thanks{Department of Mathematics,
Taiyuan University of Technology, Taiyuan 030024, P.~R.~China. E-mail: {\tt  wangjian01@tyut.edu.cn}. Research supported by NSFC No.11701407 and  Shanxi Province Science Foundation for Youths No. 201801D221028.}~~~~
Shipeng Wang\thanks{Department of Mathematics, Jiangsu University, Zhenjiang, Jiangsu 212013, P.~R. China. E-mail: {\tt  spwang22@yahoo.com}. Research supported by NSFC No.12001242.}~~~~
Weihua Yang\thanks{Department of Mathematics, Taiyuan University of Technology,Taiyuan 030024, P.~R. China. E-mail: {\tt  yangweihua@tyut.edu.cn}. Research supported by NSFC No.11671296.}~~~~
Xiaoli Yuan\thanks{Department of Mathematics,
Taiyuan University of Technology, Taiyuan 030024, P.~R.~China. E-mail: {\tt  tyutxiaoli@126.com}}~~~~
}

\begin{document}

\date{}

\maketitle

\begin{abstract}
For any positive integer $k$, we show that every maximal $C_{2k+1}$-free graph with at least $n^2/4-o(n^{3/2})$ edges contains an induced complete bipartite subgraph on $(1-o(1))n$ vertices. We also show that this is best possible.
\end{abstract}

\medskip
%\noindent {\bf Mathematics Subject Classification (2010):} 05C35, 05C65

%---------------------
\section{Introduction}

Let $H$ be a graph. A graph is called {\it $H$-free} if it does not contain $H$ as a subgraph.  We use $ex(n,H)$ to denote the maximum number of edges in an $H$-free graph on $n$ vertices.
Let $T_r(n)$ denote the {\it Tur\'{a}n graph}, the complete $r$-partite graph on $n$ vertices with $r$ partition classes of size $\lfloor \frac{n}{r}\rfloor$ or
$\lceil \frac{n}{r}\rceil$, and put $t_r(n)=e(T_r(n))$. The classic Tur\'{a}n theorem~\cite{turan 1941} tells us that $T_r(n)$ is the unique graph attaining the maximum number of edges in $K_{r+1}$-free graphs.
%The Tur\'{a}n theorem  lay the foundations of extremal graph theory, and has been highly influential in the field ever since. In the early
Erd\H{o}s and Simonovits \cite{erdos1966,erdos1968,sim1974} discovered a stability  phenomenon  on Tur\'{a}n theorem: if $G$ is a $K_{r+1}$-free graph
with $t_r(n)-o(n^2)$ edges, then $G$  can be made into the Tur\'{a}n graph by adding and deleting $o(n^2)$ edges.  This kind of results have been  extensively studied in various discrete structure  (e.g.\cite{alon2004,sam2014,sw2009}).

An alternative form of stability for  Tur\'{a}n theorem is due to Brouwer \cite{brouwer1981},
who showed that if $n\geq 2r+1$ and $G$ is a $K_{r+1}$-free graph with $e(G)\geq t_r(n)-\lfloor \frac{n}{r}\rfloor +2$, then $G$ is $r$-partite. A graph $G$ is called {\it maximal $H$-free} if it is $H$-free and
the addition of any edge in the complement $\overline{G}$ creates a copy of $H$. Tyomkyn and Uzzel \cite{tyomkyn2015} considered an interesting question: when can one guarantee an
`almost spanning' complete $r$-partite subgraph in a maximal $K_{r+1}$-free graph $G$ with $t_r(n)-o(n^2)$ edges?
They showed that every maximal $K_{4}$-free graph $G$ with $t_3(n)-cn$ edges contains a complete $3$-partite subgraph on $(1-o(1))n$ vertices. Popielarz, Sahasrabudhe and Snyder completely answered this question and gave a tight result.

\begin{thm}[Popielarz, Sahasrabudhe and Snyder \cite{popie2018}]\label{PSSthm}
	Let $r\geq 2$ be an integer. Every maximal $K_{r+1}$-free on $n$ vertices with at least $t_r(n)-o(n^{\frac{r+1}{r}})$ edges contains an induced complete $r$-partite subgraph on $(1-o(1))n$ vertices.
\end{thm}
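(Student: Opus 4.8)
The plan is to combine the Erd\H{o}s--Simonovits stability theorem with the extra rigidity supplied by maximality, phrasing the target in terms of a \emph{defect graph}. Since $e(G)\ge t_r(n)-o(n^{(r+1)/r})\ge t_r(n)-o(n^2)$ and $G$ is $K_{r+1}$-free, stability yields a partition $V=V_1\cup\cdots\cup V_r$, which I take to be a \emph{max-cut partition} (maximizing the number of crossing edges). Then the parts are nearly balanced, $|V_i|=n/r+o(n)$, and there are only $o(n^2)$ edges inside the parts; moreover max-cut optimality gives the local property $d_{V_i}(v)\le d_{V_j}(v)$ for every $v\in V_i$ and all $j$, whence $r\,d_{V_i}(v)\le d(v)$. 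Let $D$ be the graph on $V$ whose edges are the edges of $G$ inside a single part together with the non-edges of $G$ joining two distinct parts, and write $I$ for the number of edges inside parts and $M$ for the number of missing crossing edges. The point of this reformulation is that $G[V\setminus S]$ is induced complete $r$-partite precisely when $S$ is a vertex cover of $D$, so the theorem reduces to the statement that $D$ has a vertex cover of size $o(n)$. Finally, from $e(G)=\sum_{i<j}|V_i||V_j|-M+I$ and $\sum_{i<j}|V_i||V_j|\le t_r(n)$ one gets
\[
t_r(n)-e(G)\;\ge\;M-I,\qquad\text{so}\qquad M\le I+o(n^{(r+1)/r}).
\]

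The heart of the argument is a local lemma driven by $K_{r+1}$-freeness and maximality, treating the two kinds of defect separately. For an internal edge $xy$ with $x,y\in V_i$: since $x\sim y$ and $G$ is $K_{r+1}$-free, the common neighborhood $N(x)\cap N(y)$ is $K_{r-1}$-free; after discarding a controlled set of atypical vertices, the remaining $r-1$ parts induce a nearly complete $(r-1)$-partite graph, in which every transversal is a $K_{r-1}$, so a $K_{r-1}$-free subset must omit an entire part. Hence $N(x)\cap N(y)$ misses essentially all of some part $V_\ell$ with $\ell\ne i$, forcing about $n/r$ missing crossing edges incident to $\{x,y\}$. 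For a missing crossing edge $xy$ with $x\in V_i$, $y\in V_j$ between two otherwise-typical vertices, maximality demands a $K_{r-1}\subseteq N(x)\cap N(y)$; but among typical vertices $x$ and $y$ see only the $r-2$ parts other than $V_i,V_j$, so any such $K_{r-1}$ must pass through an atypical vertex lying in $V_i$ or $V_j$. Thus, once the atypical vertices are removed, missing crossing edges simply cannot survive.

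With the local lemma in hand I would run a budget argument. Let $B$ collect the vertices incident to many defects. The first half of the local lemma shows that internal edges are expensive: they generate essentially disjoint blocks of $\approx n/r$ missing crossing edges, and comparing this with $M\le I+o(n^{(r+1)/r})$ pins down $I$, hence the number of defect-heavy vertices, to be $o\!\big((t_r(n)-e(G))/n\big)=o(n^{1/r})$. Setting $A_i:=V_i\setminus B$, removal of $B$ destroys every internal edge (a survivor would force an endpoint into $B$) and, by the second half of the local lemma, every missing crossing edge (a survivor would require an atypical, already deleted, witness). Therefore $G[A_1\cup\cdots\cup A_r]$ is induced complete $r$-partite on $n-|B|=(1-o(1))n$ vertices, as required.

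The main obstacle I anticipate is a bootstrapping/circularity issue: the internal-edge half of the local lemma needs the external parts to be \emph{already} nearly complete $(r-1)$-partite, yet that cleanliness is exactly what is being proved. I would resolve this by first peeling off a genuinely small set of grossly atypical vertices using only the crude stability estimate $I=o(n^2)$ together with $r\,d_{V_i}(v)\le d(v)$, thereby seeding a clean external structure, and then iterating the maximality argument on the remainder. The second delicate point is the accounting in the budget step: one must verify that the $\approx n/r$ forced missing edges attached to distinct defects are not over-shared, and it is precisely here that the exponent $(r+1)/r$ is consumed, since a deficiency of order $n^{(r+1)/r}$ is exactly the amount that can sustain $n^{1/r}$ defect-heavy vertices. (This also matches the extremal construction witnessing tightness, in which a gadget on $\sim n^{1/r}$ vertices costs $\Theta(n^{(r+1)/r})$ edges.)
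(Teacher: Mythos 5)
First, a bookkeeping point: the paper does not prove Theorem~\ref{PSSthm} at all --- it is quoted from Popielarz, Sahasrabudhe and Snyder \cite{popie2018}. The closest thing to an in-paper proof is the argument for Theorem~\ref{thm-1} (the odd-cycle analogue), which follows the same strategy and is the right benchmark for your proposal. Your setup is sound: stability plus a max-cut partition, the reduction to finding a vertex cover of size $o(n)$ for the defect graph $D$, and the inequality $M\le I+o(n^{(r+1)/r})$ are all correct and are indeed how these proofs begin.

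The fatal gap is in your final step. You claim that after deleting the set $B$ of atypical vertices, ``missing crossing edges simply cannot survive (a survivor would require an atypical, already deleted, witness).'' This confuses deleting the \emph{witness} of a non-edge with resolving the non-edge itself. If $xy$ is a missing crossing edge between two typical vertices, maximality gives a $K_{r-1}$ in $N(x)\cap N(y)$ passing through an atypical vertex $z$; deleting $z$ removes the witness, but $x$ and $y$ both survive and are still non-adjacent, so the induced subgraph on $V\setminus B$ is still not complete $r$-partite. To cover $D$ you must delete an endpoint of \emph{every} such non-edge, and $B$ as you have defined it does not do that. The size estimate also cannot be repaired: the extremal construction (Section 4 of this paper for $r=2$) produces, with deficiency $\Theta(n^{(r+1)/r})$, a graph in which the required vertex cover has size $\Theta(n)$, so for deficiency just below this threshold the cover is $o(n)$ but generally far larger than your claimed $o(n^{1/r})$. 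What the actual proofs do at this point --- see Claims~\ref{pathtype} and \ref{claim-4} and the greedy deletion in the proof of Theorem~\ref{thm-1} --- is use the witness $z\in T$ of each surviving non-edge to delete an entire neighbourhood $S_i=N(z)\cap V_\ell$ at once, show that each such deletion accounts for $\gtrsim |S_i|^2$ non-edges, bound the number of rounds by $O(|T|)=o(n^{1/r})$, and combine these via Cauchy--Schwarz to get $\sum_i|S_i|=o(n)$. This quadratic charging scheme is the missing idea, and it is exactly where the exponent $\tfrac{r+1}{r}$ is consumed.

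A secondary issue: your deduction ``$I\cdot(n/r)\lesssim M\le I+o(n^{(r+1)/r})$, hence $I=o(n^{1/r})$'' assumes the blocks of $\approx n/r$ missing edges forced by distinct internal edges are essentially disjoint, but internal edges sharing an endpoint $x$ can all force the \emph{same} block (namely $x$ missing most of one part $V_\ell$), so $I$ itself need not be $o(n^{1/r})$; one can only bound the number of \emph{vertices} meeting internal edges, and even that requires more care than stated. You partially anticipate both difficulties in your last paragraph, but the proposal as written does not resolve them, and the first one is a genuine logical error rather than a technicality.
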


We continue  to consider this type of stability on maximal $C_{2k+1}$-free graphs. F\"{u}redi and Gunderson \cite{furedi2015} showed that the bipartite Tur\'{a}n graph attains the maximum number of edges in $C_{2k+1}$-free graphs.

\begin{thm}[F\"{u}redi and Gunderson, \cite{furedi2015}]\label{c2k1bound}
	For $k\geq 2$ and $
	n\geq 4k\geq 8$,
	\[
	ex(n,C_{2k+1})=\left\lfloor \frac{n^2}{4} \right\rfloor,
	\]
	and the unique extremal graph is the bipartite
	Tur\'{a}n graph.
\end{thm}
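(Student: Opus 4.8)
The lower bound is immediate: the balanced complete bipartite graph $K_{\lfloor n/2\rfloor,\lceil n/2\rceil}$ is bipartite, so it contains no odd cycle at all, and it has exactly $\lfloor n^2/4\rfloor$ edges; hence $ex(n,C_{2k+1})\ge \lfloor n^2/4\rfloor$. All the content lies in the matching upper bound together with uniqueness, so the plan is to show that every $C_{2k+1}$-free graph $G$ on $n\ge 4k$ vertices with $e(G)\ge\lfloor n^2/4\rfloor$ is in fact this bipartite Tur\'{a}n graph.

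First I would reduce to the case of large minimum degree, using the elementary identity $\lfloor n^2/4\rfloor=\lfloor (n-1)^2/4\rfloor+\lfloor n/2\rfloor$. If $G$ has a vertex $v$ with $\deg(v)\le \lfloor n/2\rfloor-1$, then $G-v$ is a $C_{2k+1}$-free graph on $n-1$ vertices with $e(G-v)\ge \lfloor n^2/4\rfloor-(\lfloor n/2\rfloor-1)=\lfloor(n-1)^2/4\rfloor+1$, which already exceeds the claimed extremal number for $n-1$ vertices; an induction on $n$ then gives a contradiction, so such a $G$ cannot meet the bound. Hence I may assume $\delta(G)\ge\lfloor n/2\rfloor$. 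This reduction is also where the hypothesis $n\ge 4k$ enters: the inductive step only applies while $n-1\ge 4k$, so the few cases near $n=4k$ must be handled directly, and this boundary bookkeeping is the one genuinely fiddly technical point.

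The heart of the argument is a dichotomy in the high-degree regime. If $G$ is bipartite with part sizes $a$ and $b=n-a$, then $e(G)\le ab\le\lfloor n^2/4\rfloor$, with equality only when $\{a,b\}=\{\lfloor n/2\rfloor,\lceil n/2\rceil\}$ and all cross pairs are edges, that is $G=K_{\lfloor n/2\rfloor,\lceil n/2\rceil}$; this is exactly the desired conclusion. So it suffices to rule out $G$ being non-bipartite. Here I would invoke a weak-pancyclicity theorem for dense non-bipartite graphs, namely Brandt's theorem: a non-bipartite graph with more than $(n-1)^2/4+1$ edges has girth $3$ and contains a cycle of every length between its girth and its circumference. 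Since $n\ge 4k\ge 8$ gives $e(G)\ge\lfloor n^2/4\rfloor>(n-1)^2/4+1$, a non-bipartite $G$ would contain cycles of all lengths from $3$ up to its circumference $c$. By the Erd\H{o}s--Gallai circumference bound $e(G)\le c(n-1)/2$, we get $c\ge 2e(G)/(n-1)\ge (n+1)/2\ge 2k+1$; and since $k\ge 2$ we have $3\le 2k+1\le c$, so $C_{2k+1}\subseteq G$, contradicting $C_{2k+1}$-freeness. Therefore $G$ is bipartite, and the edge count above finishes the proof.

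The main obstacle is precisely this final step: forcing a cycle of exactly the prescribed odd length $2k+1$ inside a dense non-bipartite graph. Citing the weak-pancyclicity result makes it painless, but for a self-contained proof the real work is to show that such a graph realizes all cycle lengths in an interval $[3,c]$ containing $2k+1$; the standard route is to extract a long even subpath or cycle together with a short odd cycle and then run a rotation/chord argument that adjusts the length in steps of two while controlling parity. I expect this cycle construction to be the technically demanding core, whereas the degree reduction and the bipartite edge-count are routine.
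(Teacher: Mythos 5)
The paper does not prove this statement at all: it is imported verbatim from F\"{u}redi and Gunderson \cite{furedi2015} and used as a black box (its only role here is to bound $e(G')\le n'^2/4$ inside the proof of Lemma \ref{thm-2}). So there is no in-paper proof to compare against. What you propose is essentially the argument used in the source for the range $n\ge 4k$: the lower bound from $K_{\lfloor n/2\rfloor,\lceil n/2\rceil}$, and for the upper bound and uniqueness a bipartite/non-bipartite dichotomy in which the bipartite case is settled by $ab\le\lfloor n^2/4\rfloor$ (with equality only for the balanced complete bipartite graph) and the non-bipartite case is eliminated by Brandt's weak-pancyclicity theorem together with the Erd\H{o}s--Gallai circumference bound. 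That core is correct and complete as a deduction from those two cited results: $\lfloor n^2/4\rfloor>(n-1)^2/4+1$ for $n\ge 8$, so a non-bipartite graph meeting the bound would contain cycles of every length from $3$ to its circumference $c$, and $c\ge 2e(G)/(n-1)\ge (n+1)/2>2k$ forces a $C_{2k+1}$.

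Two caveats. First, your minimum-degree reduction is both unnecessary and, as set up, genuinely gapped: the inductive hypothesis you invoke for $G-v$ lives on $n-1$ vertices, but the theorem's hypothesis is $n\ge 4k$, so at $n=4k$ there is nothing to appeal to. This is more than boundary bookkeeping, because for $n$ somewhat below $4k$ the extremal number is strictly larger than $\lfloor n^2/4\rfloor$ (dense non-bipartite constructions exist), so the induction cannot simply be pushed further down. Fortunately neither Brandt's theorem nor the circumference bound uses any degree condition, so the reduction can be deleted and the dichotomy applied to $G$ directly. Second, the proof stands or falls with Brandt's theorem, which you correctly identify as the place where the real work hides; citing it is legitimate (it is what \cite{furedi2015} does), but nothing in the present paper supplies it.
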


Let $f_k(n,m)$ be the maximum value $t$ such that every maximal $C_{2k+1}$-free graph with at least $\frac{n^2}{4}-t$ edges contains an induced complete bipartite subgraph on $n-m$ vertices.
Theorem \ref{PSSthm} implies that $f_1(n,o(n))=o(n^{3/2})$. One may guess that the magnitude of $f_k(n,o(n))$ is depending on $k$. Surprisingly, we show that $f_k(n,o(n))=o(n^{3/2})$ for all $k\geq 1$.

\begin{thm}\label{thm-0}
For  an integer $k\geq 2$, let $G$ be a maximal $C_{2k+1}$-free graph on $n$ vertices with  $e(G)\geq \frac{n^2}{4}-o(n^{3/2})$. Then $G$ contains an induced complete bipartite subgraph on $(1-o(1))n$ vertices.
\end{thm}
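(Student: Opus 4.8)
The plan is to show that a maximal $C_{2k+1}$-free graph with nearly $n^2/4$ edges must be structurally very close to the bipartite Turán graph $T_2(n)$, and then exploit maximality to "clean up" the small set of exceptional vertices. The starting point is Theorem~\ref{c2k1bound} together with the Erdős–Simonovits stability phenomenon for $C_{2k+1}$-free graphs: since $e(G) \geq \frac{n^2}{4} - o(n^{3/2}) = \frac{n^2}{4} - o(n^2)$, a stability argument should give a bipartition $V(G) = A \cup B$ under which all but $o(n^2)$ pairs respect the bipartite structure. I would first upgrade this weak stability to a strong one: let me call a vertex \emph{typical} if it has at most $o(n)$ non-neighbours on the opposite side and at most $o(n)$ neighbours on its own side. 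A counting argument, driven by the edge deficiency $o(n^{3/2})$, should force all but $o(n)$ vertices to be typical, and moreover should control the total number of "bad" edges (edges inside $A$ or inside $B$, together with missing cross-edges) by something like $o(n^{3/2})$.

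**The heart of the proof** is to use $C_{2k+1}$-freeness to rule out odd structures among the typical vertices, combined with maximality to force the cross-edges to be complete. Let $A'\subseteq A$ and $B'\subseteq B$ be the typical vertices. First I would argue that the bipartite-respecting part is genuinely complete bipartite on a large subset: if two typical vertices $u,v\in A'$ fail to be adjacent to a common neighbour structure, one can route a short odd path through the dense cross-edges to build a $C_{2k+1}$, a contradiction. More precisely, since any two typical vertices have $(1-o(1))n$ common neighbours on the other side, the bipartite graph between $A'$ and $B'$ behaves like a near-complete bipartite graph, and any single missing cross-edge $\{a,b\}$ with $a\in A'$, $b\in B'$ can be closed into a copy of $C_{2k+1}$ using $2k-1$ further typical vertices alternating between sides (this uses $k\geq 2$, i.e.\ $2k+1\geq 5$, to give enough room to avoid the short forbidden configurations). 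Hence, by $C_{2k+1}$-freeness being preserved, the cross-pairs between typical vertices must actually all be present, so $G[A'\cup B']$ is a complete bipartite graph on $(1-o(1))n$ vertices.

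\textbf{To make the subgraph induced}, I must also show there are essentially no edges inside $A'$ or inside $B'$. Here maximality works in the other direction: if $a_1,a_2\in A'$ were adjacent, then since each has $(1-o(1))n$ neighbours in $B'$, I can extend the edge $a_1a_2$ to an odd cycle of length $2k+1$ by alternating through $B'$ and $A'$, again using that typical vertices have near-complete cross-neighbourhoods. This produces a forbidden $C_{2k+1}$ unless the number of same-side edges among typical vertices is $o(1)$—in fact zero after discarding a further $o(n)$ vertices. Combining these two directions yields an induced complete bipartite subgraph on $(1-o(1))n$ vertices, which is exactly the claim.

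\textbf{The main obstacle} I anticipate is the quantitative bookkeeping that turns the edge deficiency $o(n^{3/2})$ into control of the \emph{number} of atypical vertices and same-side edges at the right scale. The naive stability bound only gives $o(n^2)$ misplaced edges, which is far too weak to conclude $(1-o(1))n$ vertices are typical; the improvement to an almost-complete bipartite structure on all but $o(n)$ vertices must genuinely use the sharper $o(n^{3/2})$ deficiency, presumably via a convexity or defect-form counting estimate showing that each atypical vertex costs $\Omega(\sqrt{n})$ edges. Getting this exponent exactly right—so that maximality can then be applied vertex-by-vertex to eliminate the remaining exceptions without blowing up the error term—is the delicate point, and is precisely where the tight bound $o(n^{3/2})$ (rather than something weaker) enters. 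A secondary technical issue is verifying that the odd-path-routing arguments for building $C_{2k+1}$ can always avoid reusing vertices and avoid the $o(n)$ atypical set, which should follow from the abundance of common neighbours but requires care for small $k$.
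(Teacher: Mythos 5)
Your high-level architecture (approximate bipartition via stability, then maximality) parallels the paper's, but the step you call the heart of the proof is wrong, and the error sits exactly where the theorem is difficult. You assert that a missing cross-pair $\{a,b\}$ with $a\in A'$, $b\in B'$ can be closed into a $C_{2k+1}$ by routing a path through $2k-1$ further typical vertices alternating between sides, and you conclude that all cross-pairs between typical vertices must be present. This is impossible for parity reasons: the required path from $a$ to $b$ has even length $2k$, and an even-length path between vertices on opposite sides of a bipartite graph cannot alternate sides, so no such cycle can be built inside $A'\cup B'$. (The inference is also inverted: maximality says that adding a non-edge creates a $C_{2k+1}$; it does not say that a non-edge whose addition would create one must be an edge.) Moreover the conclusion itself is false: in the lower-bound construction of Section 4 (taken with $\alpha=\alpha(n)\to 0$, so that the edge deficiency is $o(n^{3/2})$), every vertex of $X\cup Y$ is typical in your sense --- each $x\in X_i$ misses only the $O(\sqrt{\alpha n})$ vertices of $Y_i$ --- yet all $t=\Theta(\sqrt{\alpha n})$ blocks $X_i\times Y_i$ of cross-pairs are entirely absent. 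The correct statement is not that the typical part is complete bipartite, but that a further $o(n)$ vertices can be deleted to make it so.

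The mechanism that achieves this deletion --- and the reason the threshold is $n^{3/2}$ --- is what your proposal is missing. In the paper, Lemma \ref{thm-2} produces a small exceptional set $T$ whose removal leaves an exactly bipartite graph with large minimum degree; Claim \ref{pathtype} shows that the length-$2k$ path witnessing any missing cross-pair (supplied by maximality) must have both its second and its penultimate vertex in $T$, because a witness staying inside $X\cup Y$ could be rerouted through the abundant common neighbourhoods to produce a genuine $C_{2k+1}$ in $G$; and Claim \ref{claim-4} shows that each such pair $x',y'\in T$ forces quadratically many (in $|S_i|$) missing cross-pairs between $N(x')\cap X$ and $N(y')\cap Y$. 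Deleting the smaller of these two neighbourhoods, for at most $2|T|$ rounds, and applying the Cauchy--Schwarz inequality to $\sum_i|S_i|$ bounds the total number of deleted vertices by $o(n)$; your guess that ``each atypical vertex costs $\Omega(\sqrt n)$ edges'' is not the operative accounting. Separately, your first step (deducing that same-side edges and atypical vertices are rare by pure counting from the $o(n^{3/2})$ deficiency) is also not justified as stated; the paper needs a genuine structural argument (Lemma \ref{lem-1}, built around a $C_{2k}$ obtained from the Bukh--Jiang bound) to show that the high-minimum-degree part is exactly bipartite.
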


The follow lemma shows that every $C_{2k+1}$-free graph on $n$ vertices with $\frac{n^2}{4}-o(n^2)$ edges contains a large induced bipartite graph, which is a key ingredient in the proof of Theorem \ref{thm-0}.

\begin{lem}\label{thm-2}
For $k\geq 2$, $0<\varepsilon<\frac{1}{2000k^2}$ and $n\geq (2k)^{9k^2}$, let $G$ be a $C_{2k+1}$-free graph with  $e(G)\geq \frac{n^2}{4}-\varepsilon n^2$. Then there exists a $T\subset V(G)$ with $|T|\leq 50k \varepsilon n$ such that $G-T$ is a bipartite graph with at least $\frac{n^2}{4}-25k\varepsilon n^2$ edges and minimum degree at least $(\frac{1}{2}-\frac{1}{16k})n$.
\end{lem}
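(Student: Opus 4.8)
The plan is to first clean $G$ by repeatedly deleting low-degree vertices, and then to show that what remains is already bipartite: its high minimum degree together with $C_{2k+1}$-freeness leaves no room for an edge inside a part of a maximum cut.

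First I would run the standard minimum-degree deletion: as long as some vertex has degree less than $(\tfrac12-\tfrac1{16k})n$ in the current graph, delete it, and let $T$ collect the deleted vertices. Writing $m=|T|$ and $n':=n-m$, each deleted vertex destroys fewer than $(\tfrac12-\tfrac1{16k})n$ edges, so $e(G-T)\ge \tfrac{n^2}{4}-\varepsilon n^2-m(\tfrac12-\tfrac1{16k})n$. On the other hand $G-T$ is still $C_{2k+1}$-free, and since $n\ge (2k)^{9k^2}$ keeps $n'\ge 4k$, Theorem~\ref{c2k1bound} gives $e(G-T)\le (n')^2/4$. Comparing the two bounds collapses to an inequality of the shape $\tfrac1{16k}mn-\tfrac{m^2}{4}\le \varepsilon n^2$; because $\varepsilon<\tfrac1{2000k^2}$ keeps $m$ well below $\tfrac{n}{8k}$, this forces $m\le 32k\varepsilon n\le 50k\varepsilon n$, and feeding this back gives $e(G-T)\ge \tfrac{n^2}{4}-17k\varepsilon n^2\ge \tfrac{n^2}{4}-25k\varepsilon n^2$. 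By construction $G':=G-T$ has minimum degree at least $(\tfrac12-\tfrac1{16k})n$, so it only remains to prove that $G'$ is bipartite.

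Next I would fix a maximum cut $(X,Y)$ of $G'$. Maximality gives $d_{\mathrm{cross}}(v)\ge \tfrac12 d(v)\ge \tfrac18 n$ for every $v$, and since $e(G')\le |X||Y|\le (n')^2/4$ while $e(G')\ge \tfrac{n^2}{4}-25k\varepsilon n^2$, the crossing part of $G'$ omits at most $17k\varepsilon n^2$ of the $|X||Y|$ possible crossing edges. Hence $X$ and $Y$ are nearly balanced (both of size $\Omega(n)$), and all but $O(\sqrt{k\varepsilon}\,n)$ vertices have crossing degree at least $(1-O(\sqrt{k\varepsilon}))$ times the size of the opposite part; I will call the remaining few vertices \emph{poor}.

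The heart of the argument is to rule out any edge inside a part, and this is where I expect the main obstacle. Suppose for contradiction that $uv$ is an edge with $u,v\in X$. I would build an alternating path $u=z_0,z_1,\dots,z_{2k}=v$ of length exactly $2k$ using only crossing edges, whose closure by $uv$ is a $C_{2k+1}$ (parity matches, since $u,v$ lie on the same side). Starting from $u$, which has at least $\tfrac18 n$ crossing neighbours, I grow the path greedily through non-poor vertices; each such vertex sees a $(1-o(1))$-fraction of the opposite part, so at every one of the first $2k-2$ steps there are $\gg k$ admissible extensions avoiding the at most $2k$ already-used vertices and the $O(\sqrt{k\varepsilon}\,n)$ poor ones. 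The delicate point is the final step: I must land on a $z_{2k-1}\in Y$ adjacent both to the chosen $z_{2k-2}\in X$ and to $v$. Taking $z_{2k-2}$ non-poor, its crossing neighbourhood has size $(1-o(1))|Y|$, while $v$ retains at least $\tfrac18 n$ crossing neighbours, so the two neighbourhoods still intersect in $\Omega(n)$ vertices after discarding the used and poor ones; choosing $z_{2k-1}$ there and closing with $uv$ yields the forbidden $C_{2k+1}$. (The only role of the enormous bound $n\ge (2k)^{9k^2}$ is to keep all these greedy counts positive.) This contradiction shows $G'$ has no edge inside $X$, and by symmetry none inside $Y$, so $G'=G-T$ is the required bipartite graph with the stated edge count and minimum degree.
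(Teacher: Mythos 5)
The first half of your argument (the minimum-degree cleaning, the bound $|T|\le 50k\varepsilon n$ via comparison with the F\"uredi--Gunderson bound $e(G-T)\le (n')^2/4$, and the resulting edge and degree bounds for $G-T$) matches the paper's proof in substance and is fine, modulo the standard care needed to rule out the large root of the quadratic inequality in $m$. The problem is in the bipartiteness step. You write ``since $e(G')\le |X||Y|$\dots the crossing part of $G'$ omits at most $17k\varepsilon n^2$ of the $|X||Y|$ possible crossing edges.'' The inequality $e(G')\le |X||Y|$ is false for a maximum cut in general: $e(G')=e(X,Y)+e(X)+e(Y)$ and only $e(X,Y)\le|X||Y|$. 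The number of missing crossing pairs is $|X||Y|-e(X,Y)=|X||Y|-e(G')+e(X)+e(Y)\le 25k\varepsilon n^2+e(X)+e(Y)$, so your density claim -- and with it the statement that all but $O(\sqrt{k\varepsilon}\,n)$ vertices are ``non-poor'' -- requires an a priori upper bound on $e(X)+e(Y)$ of order $k\varepsilon n^2$. But bounding the number of edges inside the parts is essentially the bipartiteness assertion you are trying to prove; the max-cut property only gives $e(X)+e(Y)\le\frac12 e(G')$, which is of order $n^2/8$ and useless here. If $e(X)+e(Y)$ could be, say, $n^2/100$, then $\Omega(n)$ vertices could be poor with a constant too large for your greedy path construction (e.g.\ all $n/8$ crossing neighbours of $u$ could be poor), so the argument as written does not close. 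The rest of the greedy construction of a length-$2k$ crossing path from $u$ to $v$ would be sound if the near-complete-bipartiteness of the cut were established, but that is exactly the missing ingredient.

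The paper avoids this circularity by a different route (Lemma 2.3): using the Bukh--Jiang bound on $\mathrm{ex}(n,C_{2k})$ it finds an actual $2k$-cycle $C=v_1\cdots v_{2k}$ in the high-minimum-degree subgraph, observes that $C_{2k+1}$-freeness forces almost every outside vertex to attach to exactly the odd-indexed or exactly the even-indexed vertices of $C$, and then shows these two neighbourhood classes (augmented by the few exceptional vertices and the cycle vertices themselves) are independent sets, exhibiting a bipartition directly. If you want to salvage your max-cut approach, you would need an independent lemma bounding $D_2(G')$ (the number of edges whose removal makes $G'$ bipartite) by $O(k\varepsilon n^2)$ for dense $C_{2k+1}$-free graphs -- which is nontrivial and closely related to the Kor\'andi--Roberts--Scott conjecture quoted in the paper.
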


It should be mentioned that Kor\'{a}ndi, Roberts and Scott \cite{scott2020} proposed the following conjecture, which may lead to a tight upper bound on $|T|$ in Lemma \ref{thm-2} (if true). For a graph $G$, let $D_2(G)$  denote the minimum number of edges that must be
removed from $G$ to make it bipartite.

\begin{conj}[Kor\'{a}ndi, Roberts and Scott, \cite{scott2020}]
 Let $k \geq  2$, and suppose $G$ is a $C_{2k+1}$-free graph with $n$ vertices and at least
$\frac{n^2}{4}-\varepsilon n^2$ edges. Then some blowup $G^*$ of $C_{2k+3}$ satisfies $e(G^*) \geq e(G)$ and $D_2(G^*) \geq D_2(G)$.
\end{conj}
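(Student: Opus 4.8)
The plan is to reformulate the two quantities and then reduce the conjecture to a homomorphism statement, keeping control of both parameters through a single monotone operation. Write $\mathrm{mc}(G)$ for the maximum cut size of $G$; since making $G$ bipartite by edge deletions amounts to keeping the edges of some cut and deleting all monochromatic edges, we have $D_2(G)=e(G)-\mathrm{mc}(G)$. The key elementary observation is that \emph{adding} an edge never decreases $D_2$: for any graph $H$ and $e\notin E(H)$ one has $\mathrm{mc}(H)\le \mathrm{mc}(H+e)\le \mathrm{mc}(H)+1$, so $D_2(H+e)=e(H)+1-\mathrm{mc}(H+e)\ge e(H)-\mathrm{mc}(H)=D_2(H)$. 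Consequently it suffices to exhibit a single $C_{2k+3}$-blowup $G^{*}$ that \emph{contains $G$ as a subgraph}: completing $G$ to the full blowup only adds edges, hence can only raise $e$ and $D_2$, giving $e(G^{*})\ge e(G)$ and $D_2(G^{*})\ge D_2(G)$ simultaneously (and note the intermediate graphs need not be $C_{2k+1}$-free, since the monotonicity of $D_2$ holds for all graphs). Being a subgraph of a blowup of $C_{2k+3}$ is exactly the existence of a homomorphism $G\to C_{2k+3}$, i.e.\ a partition of $V(G)$ into independent classes $V_1,\dots,V_{2k+3}$ with all edges running between cyclically consecutive classes. Thus the entire difficulty is concentrated in producing such a cyclic partition, and whenever $G\to C_{2k+3}$ already holds the conjecture follows at once.

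The density hypothesis places us in the range where Lemma \ref{thm-2} (with Theorem \ref{c2k1bound}) applies, so I would first extract the coarse structure. Removing a small set $T$ leaves a bipartite graph with parts $X,Y$ of minimum degree $(\tfrac12-o(1))n$; these become the two heavy classes $V_1:=X$, $V_2:=Y$ of the intended cyclic partition. Fixing a maximum cut $(A,B)$, the quantity $D_2(G)$ equals the number of internal edges $e_G(A)+e_G(B)$, and every vertex sends at least as many edges across the cut as inside it. The goal is to route the remaining vertices and the internal (``defect'') edges into connector classes $V_3,\dots,V_{2k+3}$ so that any edge violating the cyclic pattern can be deleted without cost to the two target quantities. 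Because $G$ is so dense, a single internal edge $xy$ with $x,y$ on the same side forces its endpoints to have severely restricted neighbourhoods: otherwise two vertices on the opposite side together with a third same-side vertex would close a $C_{2k+1}$, just as one checks for $k=2$, where a single internal edge in a near-complete bipartite graph already yields a $C_5$. Quantifying this, I would show that the vertices incident to defect edges have small cross-degree and that their neighbourhoods align into $2k+1$ thin layers threading from $X$ back to $Y$; that is, the defect structure approximates the light part of a $C_{2k+3}$-blowup.

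The genuine obstacle is that $C_{2k+1}$-freeness does \emph{not} forbid shorter odd cycles, so $G$ may contain triangles and other short odd cycles; these obstruct a homomorphism to $C_{2k+3}$ and must be destroyed by edge deletions, which can lower $D_2$ and therefore cannot be handled by the monotonicity engine above. The heart of the argument must thus be an \emph{exchange} argument showing that short odd cycles are never advantageous for the joint $(e,D_2)$ trade-off: each short odd cycle should be rerouted into a cycle of length at least $2k+3$ (or absorbed into the connector classes) by local switching of edges between the heavy classes and the connectors, so that the edge count does not drop and the cut does not grow, leaving $D_2$ unharmed. Once every odd cycle has length at least $2k+3$ and all edges lie between cyclically consecutive classes, $G$ embeds into a $C_{2k+3}$-blowup and the first paragraph concludes the proof. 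I expect this rerouting to be the main difficulty, namely proving that confining the odd structure to length exactly $2k+3$ simultaneously maximizes $D_2$ and sacrifices the fewest cross edges, uniformly over all configurations of short odd cycles; the symmetrization framework of Kor\'andi, Roberts and Scott \cite{scott2020} is the natural tool, and it becomes safe to apply only after the coarse cyclic partition is fixed, since then any cycle it creates remains inside the blowup, whose odd girth is $2k+3>2k+1$.
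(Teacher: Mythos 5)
You should first be aware of a category error: the statement you were given is not a result of this paper at all. It appears verbatim as an open conjecture of Kor\'andi, Roberts and Scott, quoted only as motivation --- the authors remark that it ``may lead to a tight upper bound on $|T|$ in Lemma \ref{thm-2} (if true)'' and prove nothing about it. So there is no proof in the paper to compare against; a correct argument from you would be a new theorem. Your proposal is not that: it is a sketch whose central step is explicitly deferred. The sound parts are the two elementary reductions --- $D_2(G)=e(G)-\mathrm{mc}(G)$, and monotonicity of $D_2$ under edge addition via $\mathrm{mc}(H)\le \mathrm{mc}(H+e)\le \mathrm{mc}(H)+1$ --- both of which are correct. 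But the strategy they feed into, embedding $G$ itself into a blowup of $C_{2k+3}$, is a dead end in general. Containment in a $C_{2k+3}$-blowup is equivalent to a homomorphism $G\to C_{2k+3}$, which forces odd girth at least $2k+3$; yet a $C_{2k+1}$-free graph at the conjectured density can contain triangles. Concretely, the disjoint union of the bipartite Tur\'an graph on $n-3$ vertices with a triangle is $C_{2k+1}$-free for $k\ge 2$, has at least $\frac{n^2}{4}-2n$ edges, and admits no homomorphism to $C_{2k+3}$, so no blowup contains it and your monotonicity engine never starts. Note also that the conjecture does not ask for a blowup containing $G$ --- it asks to compare $G$ with a possibly unrelated $n$-vertex blowup --- so the reduction in your first paragraph, besides failing on such examples, proves something stronger than needed where it applies and nothing where it does not.

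That leaves your third paragraph carrying the entire weight, and it contains no argument: the ``exchange argument'' rerouting short odd cycles into cycles of length at least $2k+3$ while guaranteeing that the edge count does not drop \emph{and} the maximum cut does not grow is precisely the joint extremal statement the conjecture asserts, restated rather than proved. No local switching is defined, and there is no reason offered why a switching that preserves $e$ cannot increase $\mathrm{mc}$ (thereby decreasing $D_2$). Appealing to ``the symmetrization framework of Kor\'andi, Roberts and Scott'' cannot close this: their paper poses this as a conjecture exactly because their symmetrization, which works in the $K_{r+1}$-free setting, does not go through for forbidden odd cycles --- short odd cycles, the very obstruction you identify, are what break it. The intermediate structural claim in your second paragraph (that vertices incident to max-cut defect edges have small cross-degree and that their neighbourhoods ``align into $2k+1$ thin layers'') is likewise asserted without proof, and Lemma \ref{thm-2} gives you no control over the exceptional set $T$, which is where all the short odd-cycle structure can hide (as in the example above, where the triangle is simply a separate component). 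In short: the two easy lemmas are right, the main reduction fails on explicit examples, and the step that would constitute the actual proof is acknowledged but absent --- the statement remains, as in the paper, an open conjecture.
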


We also give a similar construction as in \cite{popie2018} to show that the magnitude of $f_k(n,o(n))$ in
Theorem \ref{thm-0} is tight. Precisely, for every $\delta>0$, there exists a maximal $C_{2k+1}$-free graph with at least $\frac{n^2}{4}-\delta n^{3/2}$ edges for which the conclusion of Theorem~\ref{thm-0} fails.

The rest of this paper is organized as follows. We finish the proof of Lemma \ref{thm-2} in the next section and  in Section 3, we prove our main result, Theorem \ref{thm-0}. In Section 4, we provide constructions to show that Theorem \ref{thm-0} is best possible.

We follow standard notation through.
Let $G$ be a graph. Denote by $\bar{G}$ the complement graph of $G$. For any $v\in V(G)$, we use $N_G(v)$ and $\deg_G(v)$ to denote the neighborhood and degree of $v$ in $G$, respectively.  Let $S$  be a subset of $V(G)$. We use $\deg_G(v,S)$ to denote the number of neighbors of $v$ in $S$. Denote by $G-S$ the subgraph of $G$ induced by $V(G)\setminus S$. When $S = \{v\}$, we simply write $G-v$.  Denote by $e_G(S)$ the number of edges of $G$ in $S$. For any two disjoint subsets $X,Y$ of $V(G)$, let $G[X,Y]$ denote the subgraph of $G$ induced by the edge set $$\{xy\in E(G)\colon x\in X \mbox{ and } y\in Y\}.$$
Denote by $e_G(X,Y)$  the number of edges in $G[X,Y]$. We often omit the subscript when the underline graph is clear.

\section{Finding large bipartite subgraphs in $C_{2k+1}$-free graphs}

In this section, we shall prove Lemma \ref{thm-2}, which is a key ingredient to the proof of Theorem \ref{thm-0}. The following two results are needed.

\begin{thm}[Bukh and Jiang, \cite{bukh-jiang}]\label{c2kbound}
For $n\geq (2k)^{8k^2}$,
\[
ex(n,C_{2k}) \leq 80\sqrt{k} \log k\cdot n^{1+1/k} +10k^2n.
\]
\end{thm}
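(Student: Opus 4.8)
The plan is to bound the average degree of a $C_{2k}$-free graph by a quantity of order $\sqrt{k}\log k\cdot n^{1/k}$ through a regularization step followed by a breadth-first growth argument that exploits the equivalence ``$C_{2k}$-free $=$ $\Theta_{k,2}$-free,'' where $\Theta_{k,2}$ denotes two internally disjoint paths of length $k$ joining a common pair of endpoints.

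First I would regularize. Write $d=2e(G)/n$ for the average degree; since $e(G)=dn/2$, it suffices to prove $d\le 160\sqrt{k}\log k\cdot n^{1/k}+20k^2$. Repeatedly deleting vertices of degree below $d/2$ removes fewer than $dn/2=e(G)$ edges in total, so it leaves a nonempty subgraph $H$ of minimum degree at least $d/2$. Replacing $H$ by a maximum cut, each vertex retains at least half its incident edges, yielding a bipartite $C_{2k}$-free graph with parts $A,B$ and minimum degree at least $d/4$. The target is now purely local: a bipartite $C_{2k}$-free graph with minimum degree $\delta$ must satisfy $\delta\lesssim \sqrt{k}\log k\cdot n^{1/k}$.

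Next comes the growth argument. Fix a root $r$ and run BFS, writing $L_0=\{r\},L_1,L_2,\dots$ for the levels. The structural input is that $C_{2k}=\Theta_{k,2}$: since $H$ contains no two internally disjoint length-$k$ paths between a common pair of endpoints, non-backtracking paths of length at most $k$ from $r$ cannot ``collide'' too often. In the crude (Bondy--Simonovits-type) accounting one shows that a typical vertex sends a constant proportion of its $\delta$ edges forward to the next level, so that $|L_{i+1}|\ge c(\delta)\,|L_i|$ for $i<k$; iterating gives $n\ge|L_k|\ge c(\delta)^{k}$ and hence a bound on $\delta$. To make the effective growth factor genuinely close to $\delta$ rather than $\delta/\mathrm{poly}(k)$, I would track not merely level sizes but a weighted count of non-backtracking length-$i$ walks from $r$, discarding at each step the ``defective'' vertices whose forward-degree is small; $\Theta_{k,2}$-freeness is what bounds how much weight such defects can absorb.

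The main obstacle is precisely this accounting. The naive BFS argument loses a factor linear in $k$ (the classical $O(k)\,n^{1+1/k}$ bound), and extracting the $\sqrt{k}\log k$ constant demands a substantially sharper control of the defective vertices. The decisive idea is to put a weighting (a potential) on the length-$i$ paths and to argue that whenever the total weight fails to multiply by the desired factor at some step, one can actually assemble two internally disjoint length-$k$ paths between a single pair of vertices, i.e.\ a $\Theta_{k,2}$, hence a $C_{2k}$, a contradiction. Balancing the count of deleted defective vertices against this forced growth, and optimizing the thresholds over the $k$ steps, is what produces the $\sqrt{k}\log k$ factor; the additive $10k^2n$ term then absorbs the low-degree vertices discarded during regularization and the vertices lost near the root. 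Verifying rigorously, with explicit constants, that insufficient weighted growth really does force a $\Theta_{k,2}$ is where the bulk of the difficulty lies.
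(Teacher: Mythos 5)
This statement is not proved in the paper at all: it is Theorem~2.1, imported verbatim from Bukh and Jiang \cite{bukh-jiang}, so the only meaningful comparison is with their original argument. Your outline does point in the right general direction --- reduce to a bipartite subgraph of minimum degree $\delta \geq d/4$ via vertex deletion and a maximum cut, then run a BFS growth argument to force $\delta \lesssim \sqrt{k}\log k\cdot n^{1/k}$ --- and your constant bookkeeping ($d \leq 160\sqrt{k}\log k\cdot n^{1/k} + 20k^2$ giving the stated edge bound) is consistent. The regularization and max-cut steps are standard and correct as you state them.

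However, there is a genuine gap, and it sits exactly where you locate ``the bulk of the difficulty'': the entire content of the theorem is the growth lemma, which you assert rather than prove. Worse, the mechanism you propose for it is false as stated. You claim that insufficient weighted growth lets one ``assemble two internally disjoint length-$k$ paths between a single pair of vertices,'' i.e.\ that BFS collisions force a $\Theta_{k,2} = C_{2k}$. But collisions among non-backtracking paths of length $i < k$ only produce even cycles of length $2i < 2k$, and \emph{shorter even cycles are not forbidden}: a $C_{2k}$-free graph may contain $C_4, C_6, \ldots, C_{2k-2}$ in abundance. This is precisely why the problem is hard: one must manufacture a cycle of length \emph{exactly} $2k$, typically by finding, between two consecutive BFS levels that fail to expand, a dense piece containing paths of a prescribed exact length, and then splicing such a path with two disjoint tree paths back toward the root --- with careful control of lengths, parities, and disjointness. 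The naive version of this splicing is what gives the classical $O(k)\,n^{1+1/k}$ bound; Bukh and Jiang's improvement to $\sqrt{k}\log k$ rests on a substantial additional structure theory (their analysis of trilayered graphs and well-placed subtrees within the BFS exploration), none of which appears in your sketch even in outline. So what you have is a correct high-level reduction plus a statement of the target lemma, with the decisive step both missing and, in the form you describe it, resting on an incorrect premise about what a failed expansion yields.
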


Let $P_{k+1}$ denote a path of length $k$.

\begin{thm}[Erd\H{o}s and Gallai, \cite{erdos-gallai}]\label{pathbound}
For $k\geq 1$,
\[
ex(n,P_{k+1}) \leq \frac{(k-1)n}{2}.
\]
\end{thm}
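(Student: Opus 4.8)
The statement to be proved is the Erd\H{o}s--Gallai bound on $ex(n,P_{k+1})$: a graph on $n$ vertices containing no path on $k+1$ vertices (equivalently, whose longest path spans at most $k$ vertices) has at most $\frac{(k-1)n}{2}$ edges. Before building an argument it is worth recording the configuration that the bound must respect: a disjoint union of cliques $K_k$ contains no $P_{k+1}$ and has exactly $\frac{(k-1)n}{2}$ edges when $k \mid n$, so the estimate is tight and any proof should degrade to equality precisely on disjoint copies of $K_k$.

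The plan is to argue by induction on $n$. The base case $n \le k$ is immediate, since $e(G) \le \binom{n}{2} = \frac{(n-1)n}{2} \le \frac{(k-1)n}{2}$. For the inductive step I assume $n > k$ and that the bound holds for all graphs on fewer vertices. If $G$ is disconnected, I would apply the inductive hypothesis to each connected component $G_i$ on $n_i < n$ vertices --- each $G_i$ is again $P_{k+1}$-free --- and sum the bounds $e(G_i) \le \frac{(k-1)n_i}{2}$ over $i$ to recover $e(G) \le \frac{(k-1)n}{2}$. Hence it suffices to handle the connected case.

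For connected $G$ with no $P_{k+1}$ and $n > k$, I would fix a longest path $P = v_0 v_1 \cdots v_p$ with $p \le k-1$. By maximality every neighbour of the endpoint $v_0$ (and of $v_p$) lies on $P$, and since $n > p+1$ some vertex off $P$ attaches to $P$ by connectivity. The engine of the argument is a P\'osa-type rotation: whenever $v_p$ is adjacent to an interior vertex $v_i$, the path $v_0\cdots v_i v_p v_{p-1}\cdots v_{i+1}$ is another longest path with new endpoint $v_{i+1}$, and iterating confines the set of vertices reachable as such endpoints, together with their neighbourhoods, to a small dense region of $G$. I would exploit this to isolate a vertex set whose deletion both preserves $P_{k+1}$-freeness and strips away enough edges that the inductive bound for $G$ minus that set closes with room to spare --- the slack being exactly the advantage a connected graph enjoys over the disjoint-$K_k$ extremal examples.

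The main obstacle is this connected case. A naive deletion of a single endpoint only gives $e(G) \le e(G-v_0) + \deg(v_0) \le \frac{(k-1)(n-1)}{2} + (k-1)$, since $\deg(v_0) \le p \le k-1$; this overshoots the target by $\frac{k-1}{2}$. Closing that gap is not a matter of one clever inequality but of the structural (rotation) analysis above, which must certify that the deleted piece carries on average fewer than $\frac{k-1}{2}$ edges per removed vertex. The careful bookkeeping of how edges concentrate near the ends of a longest path is therefore where the genuine difficulty lies.
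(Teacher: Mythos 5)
The paper does not actually prove this statement---it is quoted from Erd\H{o}s--Gallai \cite{erdos-gallai} as a black-box tool (used only once, to bound $e(S)$ in Claim 2 of Lemma \ref{lem-1})---so the benchmark is the classical argument. Your reductions are correct as far as they go: the base case $n\le k$ (where $e(G)\le \binom{n}{2}\le \frac{(k-1)n}{2}$), the reduction to connected graphs by summing over components, and the observation that deleting an endpoint of a longest path only gives $e(G)\le \frac{(k-1)(n-1)}{2}+(k-1)$, which overshoots by $\frac{k-1}{2}$. But at that point the proposal stops being a proof: you assert that P\'osa rotations should ``isolate a vertex set whose deletion both preserves $P_{k+1}$-freeness and strips away enough edges,'' without ever exhibiting such a set or verifying that it carries fewer than $\frac{k-1}{2}$ edges per vertex---and you concede yourself that this bookkeeping is ``where the genuine difficulty lies.'' That is precisely the content of the theorem, so this is a genuine gap, not a presentational one: the connected case, as sketched, does not close.

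The standard way to finish is a degree dichotomy rather than a deletion set. If $G$ is connected, $P_{k+1}$-free, $n>k$, and some vertex $v$ has $\deg(v)\le \frac{k-1}{2}$, delete $v$: induction gives $e(G)\le \frac{(k-1)(n-1)}{2}+\frac{k-1}{2}=\frac{(k-1)n}{2}$, with no overshoot. Otherwise every degree exceeds $\frac{k-1}{2}$, so by integrality $2\delta(G)\ge k$, and one derives a contradiction directly (no induction needed): take a longest path $v_0v_1\cdots v_p$ with $p\le k-1$; all neighbours of $v_0$ and $v_p$ lie on it, so the sets $A=\{i: v_0v_{i+1}\in E(G)\}$ and $B=\{i: v_iv_p\in E(G)\}$ are subsets of $\{0,\ldots,p-1\}$ with $|A|+|B|\ge 2\delta\ge k>p$, hence some $i\in A\cap B$ yields a cycle $v_0\cdots v_iv_pv_{p-1}\cdots v_{i+1}v_0$ through all of $V(P)$; since $n>p+1$, connectivity attaches an outside vertex to this cycle and produces a path of length $p+1$, contradicting maximality. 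This crossing argument is where your rotation intuition genuinely lives, but note the logic is inverted relative to your plan: in the high-minimum-degree case nothing is deleted and no edge-counting is needed---one simply shows $P_{k+1}$ must exist. Reworked along these lines your induction is the classical proof; as written, the decisive step is missing.
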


Now we prove the following lemma, which will be used in the proof of Lemma \ref{thm-2}.

\begin{lem}\label{lem-1}
Let $k\geq 2,n\geq (2k)^{8k^2}$ be integers,  and  let $\alpha,\beta$ be real numbers with
$(2k+1)/(8k+6)<\alpha \leq 1/4$ and $1/2-2\alpha\leq \beta\leq \alpha/(2k+1)$. If $G$ is a $C_{2k+1}$-free graph on $n$ vertices with $\alpha n^2$ edges  and $\delta(G) \geq (1/2-\beta)n$, then $G$ is bipartite.
\end{lem}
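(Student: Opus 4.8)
The plan is to argue by contradiction: assuming $G$ is not bipartite, I will manufacture a copy of $C_{2k+1}$. The starting point is a single even cycle of the right length. Since $e(G)=\alpha n^2>\frac{2k+1}{8k+6}n^2>\frac15 n^2$ while Theorem~\ref{c2kbound} gives $ex(n,C_{2k})\le 80\sqrt k\log k\,n^{1+1/k}+10k^2n=o(n^2)$ once $n\ge(2k)^{8k^2}$, the graph contains a cycle $D=v_1v_2\cdots v_{2k}v_1$ of length exactly $2k$, and the whole task becomes to upgrade $D$ to an odd cycle of length $2k+1$. Write $R=V(G)\setminus V(D)$ and $N_i=N_G(v_i)\cap R$, so $|N_i|\ge\delta(G)-(2k-1)\ge(\tfrac12-\beta)n-2k$. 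The hypothesis that $G$ is $C_{2k+1}$-free translates into two forbidden local patterns that I will play off against the edge count: (a) no $u\in R$ is adjacent to two consecutive vertices $v_i,v_{i+1}$ of $D$, since then $v_i\,u\,v_{i+1}$ together with the complementary arc of length $2k-1$ is a $C_{2k+1}$; and (b) no edge $ab$ of $G[R]$ has $a\in N_i$ and $b\in N_{i+2}$ for some $i$, since then the length-$3$ path $v_i\,a\,b\,v_{i+2}$ together with the arc $v_{i+2}v_{i+3}\cdots v_i$ of length $2k-2$ closes up to a $C_{2k+1}$.

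The engine is a double count of $S:=\sum_{i=1}^{2k}|N_i|\,|N_{i+2}|$ (indices modulo $2k$). On one side, the minimum-degree bound gives $S\ge 2k\big((\tfrac12-\beta)n-2k\big)^2$. On the other side, writing $J(x)=\{\,i:\,v_i\in N_G(x)\,\}\subseteq\mathbb Z_{2k}$ for $x\in R$, one has the identity
\[
S=\sum_{(a,b)\in R\times R}\bigl|\,J(a)\cap(J(b)-2)\,\bigr|,
\]
where pattern (b) forces every ordered \emph{edge} of $G[R]$ to contribute $0$, while pattern (a) makes each $J(x)$ an independent set of the cycle $\mathbb Z_{2k}$, so $|J(x)|\le k$ and each ordered non-adjacent pair contributes at most $k$. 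Since $e(G[R])\ge e(G)-2kn\ge\alpha n^2-2kn$, the number of ordered non-adjacent pairs in $R$ is at most $(1-2\alpha)n^2+O(kn)$, whence $S\le k(1-2\alpha)n^2+O(k^2n)$. Comparing the two estimates pits $2k\big((\tfrac12-\beta)n-2k\big)^2$ against $k(1-2\alpha)n^2+O(k^2n)$.

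The main obstacle is that these two estimates are \emph{exactly} in balance for the extremal graph $K_{\lfloor n/2\rfloor,\lceil n/2\rceil}$: there each $J(x)$ is a maximum alternating set of size $k$, each within-class pair is a non-edge of multiplicity precisely $k$, and both bounds equal $\tfrac{k}{2}n^2(1+o(1))$. Thus the bare count yields only $\alpha\le\tfrac14+\beta-\beta^2+o(1)$, which is consistent with the hypotheses and gives no contradiction; the genuine difficulty is to extract a strict gain from the assumption that $G$ is \emph{not} bipartite. I expect to do this by showing that non-bipartiteness forces an edge of $G[R]$ between two vertices in the same class of the colouring read off from $D$ — for instance via a triangle, which one first locates by checking (through a short-odd-cycle/external-neighbourhood argument powered by the high minimum degree) that the odd girth of $G$ must be $3$ — and that such an edge either directly realises pattern (b) or strictly lowers the admissible multiplicities in the upper bound for $S$. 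The slack encoded in $\alpha>\frac{2k+1}{8k+6}$ and $\beta\le\frac{\alpha}{2k+1}$ is exactly what is needed to turn this qualitative gain into the strict inequality $S>k(1-2\alpha)n^2$. The remaining technical points — controlling the contribution of low-degree or otherwise exceptional vertices (where Theorem~\ref{pathbound} caps the number of edges carried by sparse pieces and by the chords of $D$) and handling the possibility that $D$ is not induced — should be routine once this stability step is in place.
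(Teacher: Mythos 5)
Your proposal does not close: you set up the double count $S=\sum_i|N_i|\,|N_{i+2}|$, correctly observe that the resulting inequality $\alpha\le\tfrac14+\beta-\beta^2+o(1)$ is in exact balance at $K_{\lfloor n/2\rfloor,\lceil n/2\rceil}$ and therefore yields no contradiction, and then defer the decisive step (``I expect to do this by showing\dots'', ``should be routine once this stability step is in place''). That deferred step is the entire content of the lemma. Moreover, the specific fix you sketch is not adequate as stated: a single within-class edge of $G[R]$ need not realise your pattern (b) (that pattern requires $a\in N_i$ and $b\in N_{i+2}$ for a \emph{common} index $i$, which an edge between two vertices with few or misaligned cycle-neighbours does not provide), and even when it does, one such edge perturbs $S$ by only $O(k)$, whereas a contradiction with your two estimates requires a gain of order $n^2$. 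You would need to show that non-bipartiteness forces $\Omega(n^2)$ such defects, and nothing in the proposal does that.

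The paper takes a structurally different and more direct route after the common first step (using Theorem~\ref{c2kbound} to find a $2k$-cycle $C$): rather than a global count, it classifies the vertices outside $C$ by their attachment pattern. Since each outside vertex has at most $k$ neighbours on $C$ and degree at least $(1/2-\beta)n$, a counting argument shows that all but at most $2k\beta n+O(k^2)$ outside vertices lie in $S_{odd}\cup S_{even}$, the sets of vertices adjacent to \emph{every} odd-indexed (resp.\ every even-indexed) vertex of $C$. Short explicit cycle constructions then show that $S_{odd}$ and $S_{even}$ are independent, that each leftover vertex attaches to only one side (and to only one parity class of $C$), and that the two parity classes of $C$ are themselves independent; the minimum-degree hypothesis guarantees enough common neighbours to build the requisite $(2k+1)$-cycles at each step. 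The conclusion is an explicit bipartition $\bigl(S_{odd}\cup S_{odd}'\cup\{v_2,\dots,v_{2k}\}\bigr)$, $\bigl(S_{even}\cup S_{even}'\cup\{v_1,\dots,v_{2k-1}\}\bigr)$, not a contradiction extracted from a count. I would recommend abandoning the counting framework and redoing the argument along these constructive lines.
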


\begin{proof}
By Theorem \ref{c2kbound}, we have
\[
e(G)>100\sqrt{k} \log k\cdot n^{1+1/k}>ex(n,C_{2k})
 \]
 for $n\geq (2k)^{8k^2}$. It follows that $G$ contains a cycle of length $2k$. Let  $C=v_1v_2\ldots v_{2k}v_1$ be such a cycle and $U=V(G)\setminus V(C)$. Since $G$ is $C_{2k+1}$-free, every vertex in $U$ has at most $k$ neighbors in $V(C)$. Then $U$ can be partitioned into three classes.
\begin{equation*}
\left\{
\begin{array}{l}S_{odd} =\{u\in U\colon uv_{2i-1}\in E(G),\ \mbox{for } i=1,2\ldots,k\};\\[6pt]
S_{even}=\{u\in U\colon uv_{2i}\in E(G),\ \mbox{for } i=1,2\ldots,k\};\\[6pt]
S=S_{odd}\cup S_{even},\ S'=U\setminus S.
\end{array}\right.
\end{equation*}

By the definition of $S$ and $S'$, we have
\begin{align}\label{eq-2-1}
e(V(C),U) &\leq  k|S|+(k-1)|S'|\nonumber\\
 &= k|S| +(k-1)(n-2k-|S|)\nonumber \\
 &=|S|+(k-1)(n-2k).
\end{align}
On the other hand, every vertex in $V(C)$ has at least $\delta(G)-(2k-1)$ neighbors in $U$. It follows that
\begin{align}\label{eq-2-2}
e(V(C),U) &= \sum_{i=1}^{2k} \deg(v_i,U)\nonumber\\
 &\geq \sum_{i=1}^{2k} (\delta(G)-2k+1)\nonumber \\
 &\geq 2k(1/2-\beta)n-2k(2k-1).
\end{align}

By \eqref{eq-2-1} and \eqref{eq-2-2}, we obtain that
\begin{align}\label{ineq-3}
|S| \geq (1-2k\beta)n -2k^2
\end{align}
and
\begin{align}\label{ineq-4}
|S'| = n-|V(C)|-|S| \leq 2k\beta n+2k^2-2k.
\end{align}

We claim that $S_{odd}$ and $S_{even}$ are both independent sets in $G$. Otherwise, we may assume, without loss of generality, that $S_{odd}$  is not an independent set. Then there is an edge $xy$ in $G[S_{odd}]$, but then $xyv_1v_2\ldots v_{2k-1}x$ is a cycle of length $2k+1$, a contradiction. We now show that $S_{odd}\neq \emptyset$ and $S_{even}\neq \emptyset$. If not, without loss of generality, we may assume that $S_{odd}= \emptyset$.
 Then for any $x\in S_{even}$, by \eqref{ineq-3} we have
\[
\deg(x)  \leq n-|S| \leq 2k\beta n + 2k^2\leq \alpha n+2k^2-\beta n\leq \frac{1}{4} n +2k^2-\beta n.
\]
Since $n> 8k^2$, we obtain that
\[
\deg(x)< (1/2-\beta)n \leq \delta(G),
\]
a contradiction.

For any vertex $x\in S'$,  we show that either $\deg(x,S_{odd})=0$ or $\deg(x,S_{even})=0$ holds. Suppose not, let $x$ be a vertex in $S'$ such that $x$ has two neighbors $y\in S_{odd}$ and $z\in S_{even}$. Then $xyv_1v_2\ldots v_{2k-2}zx$ is a cycle of length $2k+1$,  a contradiction. Let $S_{odd}'$, $S_{even}'$ be a partition of $S'$ such that
\[
S_{odd}' =\{u\in S'\colon \deg(u,S_{odd})=0\} \mbox{ and } S_{even}'=\{u\in S'\colon \deg(u,S_{even})=0\}.
\]

\begin{claim}
$S_{odd}\cup S_{odd}'$ and $S_{even}\cup S_{even}'$ are both independent sets of $G$.
\end{claim}

\begin{proof}
By contradiction, without loss of generality, we may assume that $S_{odd}\cup S_{odd}'$ is not an independent set of $G$, then there is an edge $w_1w_2$ in $G[S_{odd}\cup S_{odd}']$.  Note that $S_{odd}$ is an independent set. By the definition of $S_{odd}'$, any vertex in $S_{odd}'$ has no neighbor in $S_{odd}$, implying that $w_1,w_2\in S_{odd}'$.
 For each $i\in\{1,2\}$,
$\deg(w_i,S_{odd})=0$ and by \eqref{ineq-4}
\begin{align*}
\deg(w_i,S_{even})
&\geq \delta(G) - |S'|-\deg(w_i,V(C))\\
&\geq \left(\frac{1}{2}-\beta\right)n -(2k\beta n+2k^2-2k)-(k-1)\\
&\geq  \left(\frac{1}{2}-\alpha\right)n -2k^2+k+1\\
&\geq 2.
\end{align*}
 Therefore, there are two distinct vertices $x$ and $y$ in $S_{even}$ such that $xw_1,yw_2\in E(G)$. But then $w_1xv_2v_3\ldots v_{2k-2}yw_2w_1$ is a cycle of length $2k+1$, a contradiction. This proves the claim.
\end{proof}

\begin{claim}
$\{v_1, v_3,\ldots, v_{2k-1}\}$ and $\{v_2, v_4,\ldots, v_{2k}\}$ are both independent sets of $G$.
\end{claim}

\begin{proof}
By contradiction, without loss of generality, we may assume that $\{v_1, v_3,\ldots, v_{2k-1}\}$ is not an independent set of $G$. Without loss of generality, let $v_1v_{2i+1}$ be an edge for some $i$.
 Note that  $S_{odd}\cup S_{even}$ induces a bipartite graph.
If there is a path $P$ of length $2k-3$ in $G[S_{odd},S_{even}]$, say $P=u_1u_2\ldots u_{2k-2}$ with $u_1\in S_{odd}$ and $u_{2k-2}\in S_{even}$, then $v_{2i}v_{2i+1}v_1u_1u_2\ldots u_{2k-2}v_{2i}$ is a cycle of length $2k+1$, a contraction. Thus $G[S_{odd},S_{even}]$ is $P_{2k-2}$-free.  Then, by Theorem \ref{pathbound} we have
\begin{align}\label{ineq-1}
e(S)  \leq \frac{(2k-3)|S|}{2}\leq \frac{(2k-3)n}{2}.
\end{align}
Furthermore,
\begin{align}\label{ineq-2}
e(G) &\leq  e(S) +e(S,S'\cup V(C))+2e(S'\cup V(C))= e(S)+\sum_{v\in S'\cup V(C)} \deg(v).
\end{align}
Substituting \eqref{ineq-1} to \eqref{ineq-2}, we obtain that
\begin{align*}
e(G) \leq \frac{(2k-3)n}{2}+\sum_{v\in S'\cup V(C)} \deg(v)\leq \frac{(2k-3)n}{2}+n(|S'|+|V(C)|).
\end{align*}
By \eqref{ineq-4} and $\beta\leq\frac{\alpha}{2k+1}$, we further have
\begin{align*}
e(G) &\leq  \frac{(2k-3)n}{2}+n(2k\beta n+2k^2-2k+2k)\\
&= 2k\beta n^2+\left(2k^2+\frac{2k-3}{2}\right)n\\
&\leq \frac{2k}{2k+1}\alpha n^2+\left(2k^2+k\right)n.
\end{align*}
Since $n> \frac{k(2k+1)^2}{\alpha}$, we obtain that $e(G)<\alpha n^2$,
a contradiction. This proves the claim.
\end{proof}

For any $x\in S_{odd}'$, $x$ cannot be adjacent to any vertex in $\{v_2,v_{4},\ldots,v_{2k}\}$. Otherwise, we may assume, without loss of generality, that $xv_2\in E(G)$. Similar as in the proof of Claim 1, we have
 \begin{align*}
\deg(x,S_{even})\geq \delta(G) - |S'|-\deg(x,V(C))\geq 1.
\end{align*}
Thus there is a vertex $y$ in $S_{even}$ such that $xy\in E(G)$, but then $xv_2v_3\ldots v_{2k}yx$ is a cycle of length $2k+1$, a contradiction. Hence $N(x)\cap V(C)\subset\{v_1, v_3,\ldots, v_{2k-1}\}$ for any $x\in S_{odd}'$. Similarly, $N(x)\cap V(C)\subset\{v_2, v_4,\ldots, v_{2k}\}$ for any $x\in S_{even}'$. By Claims 1 and 2, we conclude that $S_{odd}\cup S_{odd}'\cup \{v_2, v_4,\ldots, v_{2k}\}$ and $S_{even}\cup S_{even}'\cup \{v_1, v_3,\ldots, v_{2k-1}\}$ are both independent sets of $G$. Thus $G$ is bipartite.
\end{proof}

Now we apply Lemma  2.3 to prove Lemma \ref{thm-2}.

\begin{proof}[Proof of Lemma \ref{thm-2}]
%Let $\beta = \frac{1}{20k}$.
We begin by finding a subgraph $G'$ of $G$ with large minimum degree. Set $G_0=G$ and do this by deleting vertices one at a time, forming graphs $G_0,G_1,\ldots,G_i,\ldots$ In the $i$-th step, if there is a vertex $v$ with degree less than $(\frac{1}{2}-\frac{1}{20k})(n-i)$ in $G_i$, then let $G_{i+1}=G_{i}-v$. Otherwise, we stop and let $G'=G_i$ and $n'=|V(G')|$. If $n'\leq 4k$, then
\begin{align*}
e(G) &\leq e(G') +\sum_{i=0}^{n-n'-1} \left(\frac{1}{2}-\frac{1}{20k}\right) (n-i)\\
&\leq \binom{4k}{2}+ \left(\frac{1}{2}-
	\frac{1}{20k}
\right)\frac{n^2+n}{2}\\
&<\frac{n^2}{4}-\frac{n^2}{80k}+ \left(8k^2-\frac{n}{40k}\right) + \left(\frac{n}{4}-\frac{n^2}{80k}\right).
\end{align*}
Since $n\geq 320k^3$, it follows that
\[
e(G) \leq \frac{n^2}{4}-\frac{n^2}{80k} < \frac{n^2}{4}-\varepsilon n^2,
\]
which contradicts the assumption that $e(G)\geq \frac{n^2}{4}-\varepsilon n^2$.

Hence $n'> 4k$. Since $G'$ is $C_{2k+1}$-free, by Theorem \ref{c2k1bound} we have $e(G')\leq \frac{n'^2}{4}$. Then,
\begin{align}\label{eq-2-3}
e(G) &\leq e(G') +\sum_{i=0}^{n-n'-1} \left(\frac{1}{2}-\frac{1}{20k}\right) (n-i)\nonumber\\
%&	\textcolor{red}{  \leq \frac{n'^2}{4}+ \left(\frac{1}{2}-\beta\right)\left(n+(n-1)+\ldots+(n'+1)\right)\nonumber}\\
 & \leq \frac{n'^2}{4}+ \left(\frac{1}{2}-\frac{1}{20k}\right)\frac{(n+n'+1)(n-n')}{2}\nonumber\\
 & \leq \frac{n^2}{4}+ \frac{n-n'}{4} -\frac{n^2-n'^2}{40k}.
\end{align}
If $n'<n/2$, then
\[
\frac{n^2-n'^2}{40k}-\frac{n-n'}{4} > \frac{3n^2}{160k}-\frac{n}{4}\geq \frac{3n^2}{320k}> \varepsilon n^2.
\]
Together this with \eqref{eq-2-3},
we obtain $e(G)<n^2/4-\varepsilon n^2$,
a contradiction.

Hence $n'\geq n/2$. Let
\[
f(x) =\frac{x^2}{40k}-\frac{x}{4}.
\]
By Lagrange's Mean Value Theorem, there is a $\xi \in (n',n)$ such that
\begin{align*}
f(n)-f(n') =f'(\xi)(n-n')=\left(\frac{\xi}{20k}-\frac{1}{4}\right)(n-n').
\end{align*}
Since $\xi> n'\geq \frac{n}{2}$, it follows that
\begin{align}\label{eq-2-4}
f(n)-f(n') \geq \left(\frac{n}{40k}-\frac{1}{4}\right)(n-n')\geq \frac{n}{50k}(n-n').
\end{align}

 By \eqref{eq-2-3}, \eqref{eq-2-4}
 and $e(G)\geq \frac{n^2}{4}-\varepsilon n^2$, we obtain that $\frac{n}{50k}(n-n')\leq \varepsilon n^2$,
implying that $n'\geq (1-50k\varepsilon) n$. Let $T= V(G)\setminus V(G')$. Then $|T|\leq 50k\varepsilon n$. Moreover,
\begin{align*}
e(G')\geq e(G)-\left(\frac{1}{2}-\frac{1}{20k}\right)n|T|\geq \frac{n^2}{4}-25k\varepsilon n^2,
\end{align*}
and
\begin{align*}
\delta(G')&\geq \left(\frac{1}{2}-\frac{1}{20k}\right)n'\geq \left(\frac{1}{2}-\frac{1}{20k}-25k\varepsilon+\frac{5}{2}\varepsilon\right)n\geq \left(\frac{1}{2}-\frac{1}{16k}\right)n.
\end{align*}
Since $n\geq (2k)^{9k^2}$, we have
\[
n'\geq (1-50k\varepsilon)n \geq \left(1-\frac{1}{400k}\right) n\geq (2k)^{8k^2}.
\]
Apply Lemma \ref{lem-1} to $G'$ with $\alpha = 1/4-25k\varepsilon$ and $\beta=1/(16k)$, we conclude that $G'$ is bipartite. Thus the theorem holds.
\end{proof}

\section{Making the bipartite subgraph complete}

In this section, we prove a quantitative form of Theorem \ref{thm-0}.

\begin{thm}\label{thm-1}
For two integers  $k\geq 2,n\geq (2k)^{9k^2}$, and real number $\varepsilon \in (0,\frac{1}{2000k^2})$, let $G$ be a maximal $C_{2k+1}$-free graph on $n$ vertices with  $e(G)= n^2/4-\varepsilon n^{\frac{3}{2}}$. Then $G$ contains an induced complete bipartite subgraph on $(1-250k^2\varepsilon)n$ vertices.
\end{thm}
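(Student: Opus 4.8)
The plan is to extract from $G$ a large, dense, almost-complete bipartite subgraph, and then to delete only a few vertices to make it genuinely complete. First I would apply Lemma \ref{thm-2} with $\varepsilon'=\varepsilon n^{-1/2}$, noting that $e(G)=n^2/4-\varepsilon n^{3/2}=n^2/4-\varepsilon' n^2$ and that $\varepsilon'<\varepsilon<\frac{1}{2000k^2}$, so its hypotheses hold. This produces a set $T\subseteq V(G)$ with $|T|\le 50k\varepsilon\sqrt{n}$ such that $H:=G-T$ is bipartite with parts $A,B$, satisfies $e(H)\ge \frac{n^2}{4}-25k\varepsilon n^{3/2}$, and has $\delta(H)\ge(\frac12-\frac1{16k})n$. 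Since $A$ and $B$ are independent in $H$ (hence in $G$), and every vertex of $A$ sends all its $H$-edges into $B$, the minimum-degree bound forces $|A|,|B|\in[(\frac12-\frac1{16k})n,(\frac12+\frac1{16k})n]$. Because $|A||B|\le\big(\tfrac{|A|+|B|}{2}\big)^2\le n^2/4$, the number of \emph{cross non-edges} (pairs $a\in A$, $b\in B$ with $ab\notin E(G)$) is at most $|A||B|-e(H)\le 25k\varepsilon n^{3/2}$.

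Next, with threshold $s:=\sqrt{n}/(4k)$, let $A_0$ be the set of $a\in A$ having more than $s$ non-neighbours in $B$, and define $B_0$ symmetrically. Each vertex of $A_0\cup B_0$ is incident to more than $s$ cross non-edges, so $|A_0|+|B_0|\le 2\cdot 25k\varepsilon n^{3/2}/s=200k^2\varepsilon n$. Setting $A'=A\setminus A_0$ and $B'=B\setminus B_0$ and deleting $T\cup A_0\cup B_0$, I remove at most $200k^2\varepsilon n+50k\varepsilon\sqrt{n}\le 250k^2\varepsilon n$ vertices, so $|A'|+|B'|\ge(1-250k^2\varepsilon)n$. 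As $A',B'$ remain independent in $G$, it suffices to prove that $G[A',B']$ is \emph{complete} bipartite; then $G[A'\cup B']$ is the desired induced complete bipartite subgraph.

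The crux is therefore the claim that $a\in A'$ and $b\in B'$ imply $ab\in E(G)$. Suppose not. By maximality, $G+ab$ contains a $C_{2k+1}$, i.e. $G$ contains a path $P$ of length $2k$ from $a$ to $b$; since $H$ is bipartite with $a,b$ in different parts, every $a$–$b$ path inside $A\cup B$ has odd length, so $P$ must use at least one vertex of $T$. I would then convert $P$ into a genuine $C_{2k+1}$ of $G$ — impossible, as $G$ is $C_{2k+1}$-free — by rerouting. The mechanism I anticipate is to locate a vertex $t\in T$ on $P$ whose two path-neighbours lie in \emph{opposite} parts of $H$, and to close them up through a path of the complementary parity inside $H$: since $\delta(H)\ge(\frac12-\frac1{16k})n$ and both parts have size $\approx n/2$, the graph $H$ contains paths of every prescribed small length between any two vertices in opposite parts while avoiding any fixed set of $O(k)$ forbidden vertices. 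The hypothesis $a\in A'$, $b\in B'$ (few non-neighbours) is exactly what leaves enough room to splice these connecting paths onto $P$, and adjusting their lengths produces a cycle of length precisely $2k+1$.

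The main obstacle is this final rerouting step, which has two delicate points. First, one must control the parity so that the rerouted cycle has length \emph{exactly} $2k+1$ rather than some other odd value; this needs a careful case analysis of how $P$ interleaves its $T$-vertices with the parts $A$ and $B$. Second, the connecting paths inside $H$ must be chosen internally disjoint from $P$ and from one another, which is where the low non-degrees of $a$ and $b$ (ensured by $a\in A'$, $b\in B'$) must be combined with the high minimum degree of $H$ to guarantee enough unused vertices at each splice. Everything else reduces to the counting already set up above.
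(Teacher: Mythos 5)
Your reduction and counting are fine --- indeed, applying Lemma \ref{thm-2} with $\varepsilon n^{-1/2}$ in place of $\varepsilon$ is the right way to obtain $|T|=O(k\varepsilon\sqrt{n})$ and a budget of at most $25k\varepsilon n^{3/2}$ cross non-edges --- but the proof collapses at its central claim: it is \emph{not} true that $a\in A'$ and $b\in B'$, each with at most $\sqrt{n}/(4k)$ cross non-neighbours, must be adjacent, and no rerouting argument can establish this. The lower-bound construction of Section 4 is a direct counterexample. Taking $\alpha\approx\varepsilon^2/(4k)$, so that $e(G)\approx n^2/4-\varepsilon n^{3/2}$, the cross non-edges form $t\approx\varepsilon\sqrt{n}/(4k)$ vertex-disjoint ``blobs'' $X_i\times Y_i$ with $|X_i|=|Y_i|\approx\varepsilon\sqrt{n}/2$; each such non-edge $xy$ is un-addable because of the path $xz_1^i\cdots z_{2k-1}^iy$ of length $2k$ running through $Z_i$, and this obstruction persists in any maximal completion. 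Since $\varepsilon<1/(2000k^2)$, every vertex there has far fewer than $\sqrt{n}/(4k)$ non-neighbours across the bipartition, so your sets $A_0,B_0$ are empty and your argument would declare the graph minus $T$ complete bipartite --- yet Proposition 4.3 shows one must delete at least $\alpha n/4$ further vertices. The witnessing path of length $2k$ genuinely passes through $T$ and cannot be closed into a cycle inside $G$; there is no contradiction to reach, so the ``delicate rerouting'' you defer is not merely hard, it is impossible. (A smaller threshold does not help either: deleting every vertex incident to a non-edge could cost $\Theta(k\varepsilon n^{3/2})$ vertices.)

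For contrast, the paper's mechanism exploits how the non-edges \emph{cluster} rather than individual non-degrees. Using the minimum degree of $G-T$, it first shows that for every surviving non-edge $x_iy_i$ the witnessing path of length $2k$ can be chosen with its second and penultimate vertices $x_i',y_i'$ in $T$; then it observes that $N(x_i')\cap X$ and $N(y_i')\cap Y$ span almost no edges (any edge between them avoiding the $2k-3$ interior path vertices would close a $C_{2k+1}$), hence at least $|S_i|^2/(16k^2)$ non-edges, where $S_i$ is the smaller of the two neighbourhoods. Greedily deleting $S_i$ at each step, the process runs at most $2|T|$ times, and Cauchy--Schwarz converts the non-edge budget together with the bound on $|T|$ into the stated $O(k^2\varepsilon n)$ bound on the total number of deleted vertices. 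Some device of this kind --- charging each deletion to a quadratically large set of non-edges indexed by pairs of $T$-vertices --- is exactly what your proposal is missing.
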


\begin{proof}
By Lemma \ref{thm-2}, there is a $T\subset V(G)$ with $|T|\leq 50k \varepsilon n$ such that $G-T$ is a bipartite graph with at least $n^2/4-25k\varepsilon n^2$ edges and minimum degree at least $\left(\frac{1}{2}-\frac{1}{16k}\right)n$. Let $X$ and $Y$ be two partite classes of $G-T$. Since
\[
\delta(G-T) \geq \left(\frac{1}{2}-\frac{1}{16k}\right)n,
\]
we have $|X|,|Y|\geq \left(\frac{1}{2}-\frac{1}{16k}\right)n$, implying that $|X|,|Y|\leq \left(\frac{1}{2}+\frac{1}{16k}\right)n$ because $|X|+|Y|\leq n$.

\begin{claim} \label{pathtype}
For any non-edge $xy$ in $G-T$ with $x\in X$ and $y\in Y$, there is a path $P$ of length $2k$ connecting $x$ and $y$ in $G$, say $P=xx'u_1\ldots u_{2k-3}y'y$, such that $x',y'\in T$.
\end{claim}

\begin{proof}
Since $G$ is  a maximal $C_{2k+1}$-free graph, $G+xy$ contains a cycle $C$ of length $2k+1$ and $xy\in E(C)$. Let $Q=xx'u_1\ldots u_{2k-3} y'y$ be a path of length $2k$ in $G$.  If both $x'$ and $y'$ are in $T$, then we are done. Hence we assume that $\{x',y'\}\not\subset T$. Without loss of generality, we may assume that $x'\notin T$, then $x'\in Y$ because $G-T$ is bipartite. Note that
\begin{align*}
 |N_{G-T}(x')\cap N_{G-T}(y)| &\geq \deg_{G-T}(x') +\deg_{G-T}(y)-|X|\\
 & \geq 2\left(\frac{1}{2}-\frac{1}{16k}\right)n -\left(\frac{1}{2}+\frac{1}{16k}\right)n\\
 & = \left(\frac{1}{2}-\frac{3}{16k}\right)n\\
 &> 2k.
\end{align*}
Thus there is a vertex $u\in (N_{G-T}(x')\cap N_{G-T}(y))\setminus V(Q)$, but then $ux'u_1\ldots u_{2k-3}y'yu$ is a cycle of length $2k+1$ in $G$, a contradiction. Hence $x',y'\in T$ and the claim follows.
\end{proof}

Now we delete vertices incident with non-edges in $G-T$ until the resulting graph is complete bipartite by a greedy algorithm. Let $G_0=G-T$ and do this by deleting a set of vertices at a time, forming graphs $G_0,G_1,\ldots,G_i,\ldots,G_{l}$.  In the $i$-th step, if $G_i$ is complete bipartite, then we stop and let $l=i$. If $G_i$ is not complete bipartite, then there is an edge $x_iy_i$ in $\bar{G_i}$ with $x_i\in X$ and $y_i\in Y$. By Claim \ref{pathtype}, there is a path $Q_i$ of length $2k$ connecting $x_i$ and $y_i$
in $G$, say
\[
Q_i=x_ix_i'u_{i,1}\ldots u_{i,2k-3} y_i'y_i,
\]
with $x_i',y_i'\in T$.
Let $X_i = N_{G_i}(x_i')\cap X$, $Y_i = N_{G_i}(y_i')\cap Y$, $S_i$ be the one of $X_i$ and $Y_i$ with smaller size and let $G_{i+1} =G_i-S_i$.

\begin{claim}\label{claim-4}
For each $i\in \{0,1,\ldots,l-1\}$, there are at least $\frac{|S_i|^2}{16k^2}$ non-edges between $X_i$ and $Y_i$.
\end{claim}

\begin{proof}
 Note that
\[
Q_i=x_ix_i'u_{i,1}\ldots u_{i,2k-3} y_i'y_i
 \]
is a path of length $2k$. For any edge $ab$ in $G$ with $a\in X_i$ and $b\in Y_i$, if $\{a,b\}\cap \{u_{i,1},\ldots, u_{i,2k-3}\}=\emptyset$, then $ax_i'u_{i,1}\ldots u_{i,2k-3}y_i'ba$ is a cycle of length $2k+1$, a contradiction.
Hence every edge between $X_i$ and $Y_i$
intersects $\{u_{i,1},\ldots, u_{i,2k-3}\}$. Since $u_{i,1}\ldots u_{i,2k-3}$ is a path and $G_i$ is bipartite, we have
\[
|\{u_{i,1},\ldots, u_{i,2k-3}\} \cap X_i|< k \mbox{ and } |\{u_{i,1},\ldots, u_{i,2k-3}\} \cap Y_i|< k,
\]
implying that
\[
e(X_i,Y_i)<k(|X_i|+|Y_i|).
\]
It follows that
\[
e_{\bar{G}}(X_i,Y_i) = |X_i||Y_i| - e(X_i,Y_i) >|X_i||Y_i|-k(|X_i|+|Y_i|)
\]

 Without loss of generality, we may assume that $|X_i|\leq |Y_i|$, then $S_i = X_i$. If $|S_i|\geq 4k$, then
\begin{align*}
e_{\bar{G}}(X_i,Y_i)& >|S_i||Y_i|-k(|S_i|+|Y_i|)\\
& = |Y_i|(|S_i|-k)-k|S_i|\\
& \geq |S_i|^2-2k|S_i|\\
& \geq   \frac{|S_i|^2}{2}.
\end{align*}
If $|S_i|< 4k$, then since $x_iy_i$ is a non-edge between $X_i$ and $Y_i$, we have
\[
e_{\bar{G}}(X_i,Y_i) \geq 1 >  \frac{|S_i|^2}{16k^2}.
\]
Thus the claim holds.
\end{proof}

Note that there are $\sum_{i=0}^{l-1} |S_i|$ vertices deleted from $G-T$ and $G_l$ is complete bipartite. It follows that
\[
e_{\bar{G}}(X,Y) = \sum_{i=0}^{l-1} e_{\bar{G}}(X_i,Y_i).
\]
By Claim \ref{claim-4}, we obtain that
\begin{align}\label{eq-3-1}
\sum_{i=0}^{l-1} \frac{|S_i|^2}{16k^2} &\leq  e_{\bar{G}}(X,Y) \nonumber\\
&\leq |X||Y| - e(G-T)\nonumber \\
&\leq \frac{n^2}{4} -\left(\frac{n^2}{4}-25k\varepsilon n^2\right)\nonumber \\
&\leq  25k\varepsilon n^2.
\end{align}
By Cauchy-Schwarz inequality, we have
\begin{align}\label{eq-3-2}
\left(\sum_{i=0}^{l-1} |S_i|\right)^2 \leq \left(\sum_{i=0}^{l-1} |S_i|^2\right)l
\end{align}
In each step of the greedy algorithm, there is a $u\in T$ such that either $N(u)\cap X$ or $N(u)\cap Y$ is deleted.
It follows that $l\leq 2|T|$.
By \eqref{eq-3-1} and \eqref{eq-3-2},
\begin{align*}
\left(\sum_{i=0}^{l-1} |S_i|\right)^2 \leq 16\cdot 25 k^3 \varepsilon n^2 \cdot 2|T| \leq 16\cdot 25\cdot 100 k^4 \varepsilon^2 n^3.
\end{align*}
Thus,
\[
\sum_{i=0}^{l-1} |S_i| \leq 200 k^2\varepsilon n^{\frac{3}{2}}.
\]
Therefore, the total number of vertices deleted is at most

\begin{align*}
\sum_{i=0}^{l-1} |S_i|+|T|& =200 k^2\varepsilon n^{\frac{3}{2}}+ 50k\varepsilon n \leq 250 k^2\varepsilon n^{\frac{3}{2}}
\end{align*}
%\begin{align*}
%\sum_{i=0}^{l-1} |S_i|+|T|& =240 k^2\varepsilon n^{\frac{3}{2}}+ 50k\varepsilon n \\
%& = 240 k^2 n^{\frac{3}{2}}\cdot \frac{m}{n^2}+ 50k n \cdot \frac{m}{n^2}\\
%& =(240 k^2m n^{-\frac{3}{2}}+ 50kmn^{-2}) %n \\
%& \leq 300k^2m n^{-\frac{3}{2}}n.
%\end{align*}
This completes the proof.
\end{proof}

\section{The lower bound constructions}

\begin{defn}\label{defn-1}
Given  $k\geq 2$ and $0<\alpha<\frac{1}{2}$, we define a class of graphs $\mathcal{G}_{k,\alpha}(n)$. Let
$t =\lceil \sqrt{\frac{\alpha n}{4k}} \rceil$.  A graph $G$ on $n$ vertices is in $\mathcal{G}_{k,\alpha}(n)$ if $V(G)$ can be partitioned into subsets $X_1,\ldots,X_t,X_{t+1}, Y_1,\ldots,Y_t,Y_{t+1}$, $Z_1,\ldots,Z_t$ such that:

\begin{itemize}
  \item[(i)] For each $i=1,\ldots,t$, $|Z_i|=2k-1$ and $G[Z_i]$ contains a path of length $2k-2$, say $P^i=z^i_1z^i_2\ldots z^i_{2k-1}$.
  \item[(ii)] For each $i =1,\ldots,t$,
  \[
  |X_i|=|Y_i| =\left \lfloor \frac{\alpha n-(2k-1)t}{2t} \right\rfloor
  \]
  and $X_{t+1},Y_{t+1}$  is a balanced partition of
  \[
  V(G)\setminus \bigcup_{i=1}^t(X_i\cup Y_i\cup Z_i).
  \]
  \item[(iii)] Both $X_1\cup\cdots \cup X_t\cup X_{t+1}$ and $Y_1\cup\cdots \cup Y_t \cup Y_{t+1}$ are independent sets in $G$.
  \item[(iv)] For each $i=1,\ldots,t$, $G[X_i,Y_i]$ is empty and $G[X_{t+1},Y_{t+1}]$ is complete. For each $i,j\in \{1,\ldots ,t+1\}$ with $i\neq j$, $G[X_i,Y_j]$ is complete.
  \item[(v)] For each $i= 1,\ldots,t$, $z_1^i$ is adjacent to every vertex in $X_i$ and  $z_{2k-1}^i$ is adjacent to every vertex in $Y_i$.
\end{itemize}
\end{defn}

When refer to vertex classes of a graph in $\mathcal{G}_{k,\alpha}$, we use $X,Y$ and $Z$ to denote $X_1\cup\cdots \cup X_{t+1}$, $Y_1\cup\cdots \cup Y_{t+1}$ and  $Z_1\cup\cdots \cup Z_t$, respectively.

\begin{prop}
$\mathcal{G}_{k,\alpha}(n)$ contains a $C_{2k+1}$-free graph.
\end{prop}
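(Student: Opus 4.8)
The plan is to exhibit one explicit member of $\mathcal{G}_{k,\alpha}(n)$ and show it is $C_{2k+1}$-free; since conditions (i)--(v) leave several adjacencies unconstrained, I would take the \emph{sparsest} admissible graph. Concretely, let $G$ be the graph in which $G[Z_i]$ is exactly the path $P^i=z^i_1\cdots z^i_{2k-1}$ (no chords), there are no edges between distinct $Z_i$ and $Z_j$, and the only edges between $Z$ and $X\cup Y$ are those forced by (v), namely $z^i_1$ to all of $X_i$ and $z^i_{2k-1}$ to all of $Y_i$; the edges inside $X\cup Y$ are precisely those prescribed by (iii) and (iv). One checks directly that $G$ satisfies (i)--(v), so $G\in\mathcal{G}_{k,\alpha}(n)$, and it remains to prove that $G$ has no cycle of length $2k+1$.

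First I would set up a parity certificate. Colour every vertex of $X$ by $0$ and every vertex of $Y$ by $1$, and colour the path $P^i$ alternately starting from $z^i_1$ with colour $1$; then $z^i_{2k-1}$ also receives colour $1$. With this colouring every edge is properly $2$-coloured \emph{except} the edges $z^i_{2k-1}y$ with $y\in Y_i$, which I call \emph{defect edges}. Thus deleting all defect edges makes $G$ bipartite, and consequently every odd cycle of $G$ must use an odd number (in particular at least one) of defect edges.

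The second ingredient is a structural dichotomy describing how a cycle can meet a single $Z_i$. Because each interior vertex $z^i_2,\dots,z^i_{2k-2}$ has degree $2$ in $G$, any cycle that uses an interior vertex is forced to contain both incident path-edges, and hence must traverse the \emph{entire} path from $z^i_1$ to $z^i_{2k-1}$; moreover $z^i_1$ has external neighbours only in $X_i$ and $z^i_{2k-1}$ only in $Y_i$. Therefore a cycle meets $Z_i$ in one of a few ways: it avoids $Z_i$, it turns around at an endpoint (both cycle-neighbours of $z^i_1$ in $X_i$, or both of $z^i_{2k-1}$ in $Y_i$), or it runs the full path, entering from $X_i$ and leaving into $Y_i$. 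A turnaround at $z^i_{2k-1}$ contributes two defect edges and a full traversal contributes exactly one, so the number of defect edges used is congruent mod $2$ to the number of full path-traversals.

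Combining the two, an odd cycle must contain an odd, hence nonzero, number of full path-traversals. But a full traversal of $P^i$ together with its two connecting edges already occupies $x,z^i_1,\dots,z^i_{2k-1},y$, that is all $2k+1$ vertices of a $C_{2k+1}$ and $2k$ of its edges; the single remaining edge would have to join $x\in X_i$ to $y\in Y_i$, and $G[X_i,Y_i]$ is empty by (iv). (Two traversals are impossible by vertex count, since $2(2k-1)>2k+1$ for $k\ge2$.) This contradiction shows $G$ is $C_{2k+1}$-free. The step I expect to require the most care is the structural dichotomy together with the defect-edge bookkeeping: one must argue cleanly that interior degree-$2$ vertices force a full traversal and that the endpoints' restricted external neighbourhoods leave no other way for a cycle to pass through $Z_i$. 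It is also worth emphasising that the argument genuinely needs the sparsest member of the class, since adding chords inside $Z_i$ or extra $Z$--$(X\cup Y)$ edges could create short odd cycles.
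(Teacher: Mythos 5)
Your proposal is correct and follows essentially the same route as the paper: take the sparsest (minimum-edge) member of $\mathcal{G}_{k,\alpha}(n)$, argue that any $C_{2k+1}$ must fully traverse some path $P^i$ from $X_i$ to $Y_i$, and derive a contradiction from the emptiness of $G[X_i,Y_i]$. Your defect-edge parity bookkeeping is in fact a more careful justification of the step the paper states only briefly (that some $z_1^i\cdots z_{2k-1}^i$ must occur as a segment of the cycle, rather than the cycle merely "turning around" at an endpoint of some $P^i$).
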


\begin{proof}
Let $G$ be a graph in $\mathcal{G}_{k,\alpha}(n)$ with minimum number of edges. By Definition \ref{defn-1} (i) and (v),  $N(z_1^i)\cap (X\cup Y)=X_i$, $N(z_{2k-1}^i)\cap (X\cup Y)=Y_i$ and $G[Z_i]$ is a path of length $2k-2$ for $i=1,\ldots,t$.
We shall show that $G$ is  $C_{2k+1}$-free. Suppose not, let $C$ be a cycle of length $2k+1$ in $G$. By Definition \ref{defn-1} (iii), $G[X,Y]$ is bipartite, implying that $V(C)\cap Z\neq \emptyset$. Since $Z$ induces $t$ vertex-disjoint paths of length $2k-2$ each, there exists an $i\in\{1,\ldots,t\}$ such that $z^i_1z^i_2\ldots z^i_{2k-1}$  is a segment of $C$, it follows that there is a path of length 3 between $z^i_1$ and $z^i_{2k-1}$ in $G$. However, $N(z_1^i)\cap (X\cup Y)=X_i$, $N(z_{2k-1}^i)\cap (X\cup Y)=Y_i$ and $G[X_i,Y_i]$ is empty, which leads to a contradiction. Thus, $G$ is $C_{2k+1}$-free.
\end{proof}

The following proposition shows that Theorem~\ref{thm-0} is tight.
\begin{prop}
Let $0<\alpha<\frac{1}{2}$. If $G$ is a maximal $C_{2k+1}$-free graph in $\mathcal{G}_{k,\alpha}(n)$ with $n\geq \frac{36k}{\alpha}$, then $e(G) \geq \frac{n^2}{4}-2\sqrt{k\alpha} n^{\frac{3}{2}}$ and any induced complete bipartite subgraph of $G$ has at most $(1-\frac{\alpha}{4})n$ vertices.
\end{prop}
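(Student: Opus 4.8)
The plan is to prove the two assertions separately. Throughout write $m=|X_i|=|Y_i|=\lfloor(\alpha n-(2k-1)t)/(2t)\rfloor$ for $i\le t$, and recall $t=\lceil\sqrt{\alpha n/(4k)}\rceil$ and $|Z|=(2k-1)t$.

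For the edge bound I would discard every edge incident to $Z$ (these only help) and count just the edges between $X$ and $Y$. By (iii) and (iv) the bipartite graph $G[X,Y]$ is complete except for the $t$ empty blocks $G[X_i,Y_i]$ with $i\le t$, so
\[
e(G)\ge e(X,Y)=|X||Y|-\sum_{i=1}^t|X_i||Y_i|=|X||Y|-tm^2 .
\]
Since $X_{t+1},Y_{t+1}$ is balanced and the other blocks are equal, $\big||X|-|Y|\big|\le 1$ with $|X|+|Y|=n-(2k-1)t$, whence $|X||Y|\ge\frac14(n-(2k-1)t)^2-\frac14\ge\frac{n^2}{4}-\frac{(2k-1)tn}{2}-\frac14$. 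On the other hand $tm^2\le(\alpha n-(2k-1)t)^2/(4t)\le\alpha^2n^2/(4t)$. Substituting $t\ge\sqrt{\alpha n/(4k)}$ bounds the missing-edge term by $\tfrac{\sqrt k}{2}\alpha^{3/2}n^{3/2}\le\tfrac12\sqrt{k\alpha}\,n^{3/2}$ (using $\alpha<1$), while $t\le\sqrt{\alpha n/(4k)}+1$ bounds $\frac{(2k-1)tn}{2}$ by $\tfrac12\sqrt{k\alpha}\,n^{3/2}+O(kn)$. Adding these gives $e(G)\ge\frac{n^2}{4}-\sqrt{k\alpha}\,n^{3/2}-O(kn)$, and the linear error is absorbed into the spare $\sqrt{k\alpha}\,n^{3/2}$ once $n\ge k/\alpha$ (in particular for $n\ge 36k/\alpha$), yielding $e(G)\ge\frac{n^2}{4}-2\sqrt{k\alpha}\,n^{3/2}$.

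For the second assertion, let $S=A\cup B$ induce a complete bipartite subgraph with independent parts $A,B$. The one fact driving everything is that \emph{any independent set of $G$ contained in $S$ lies entirely in $A$ or entirely in $B$}: a vertex of it in $A$ and one in $B$ would form a non-edge, contradicting completeness. As $X$ and $Y$ are independent, I apply this to $X\cap S$ and to $Y\cap S$. If they land on opposite sides, say $X\cap S\subseteq A$ and $Y\cap S\subseteq B$, then any $x\in X_i\cap S$ and $y\in Y_i\cap S$ would be an $A$–$B$ pair, hence adjacent; but $G[X_i,Y_i]$ is empty, so for each $i\le t$ at most one of $X_i,Y_i$ meets $S$, and therefore at least $m$ vertices of $X_i\cup Y_i$ are excluded from $S$. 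Summing over the $t$ disjoint blocks gives $|S|\le n-tm$. If instead $X\cap S$ and $Y\cap S$ lie on the same side, that side is an independent set; if it meets both $X$ and $Y$ it is confined to a single block $X_{i_0}\cup Y_{i_0}$ (again by (iv)), otherwise it meets only one of $X,Y$, so in either sub-case $|S|\le\max\{2m,|X|,|Y|\}+|Z|\le\frac n2+O(\sqrt n)$, comfortably below $(1-\tfrac\alpha4)n$.

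It remains to turn $|S|\le n-tm$ into $|S|\le(1-\tfrac\alpha4)n$ by checking $tm\ge\tfrac\alpha4 n$. From $m\ge(\alpha n-(2k-1)t)/(2t)-1$ one gets $tm\ge\frac{\alpha n}{2}-\frac{(2k+1)t}{2}$, so it suffices that $\alpha n\ge2(2k+1)t$; using $t\le\sqrt{\alpha n/(4k)}+1$ this reduces to $\alpha n\ge3\sqrt{k\alpha n}+6k$, which holds exactly because $\alpha n\ge36k$. The main obstacle is the structural step of part two: recognizing that the single independence observation, applied to $X$ and $Y$, already forces one of $X_i,Y_i$ out of $S$ for every block and hence produces the linear deficit $tm\approx\frac\alpha2 n$; the rest is the routine, constant-sensitive verification that $n\ge36k/\alpha$ makes both $tm\ge\frac\alpha4 n$ and the edge estimate go through. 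A secondary point worth keeping clean is that both arguments use only the fixed adjacencies (iii)–(iv) and the value $|Z|=(2k-1)t$, so they are insensitive to whatever extra edges a maximal representative of $\mathcal{G}_{k,\alpha}(n)$ may carry inside $Z$.
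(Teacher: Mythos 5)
Your proof is correct, and it is worth separating the two halves. The edge-count half is essentially the paper's argument: both lower-bound $e(G)$ by $e(X,Y)=|X||Y|-\sum_{i\le t}|X_i||Y_i|$, substitute $t=\lceil\sqrt{\alpha n/(4k)}\rceil$ to see that each of the two error terms is at most $\tfrac12\sqrt{k\alpha}\,n^{3/2}$ plus linear terms, and absorb the linear terms using $n\ge 36k/\alpha$. For the second half you organize the case analysis around a different pivot. The paper first records that membership in $V(H)$ is all-or-nothing on each class $X_i$, $Y_i$, and then splits on whether both $X_{t+1}$ and $Y_{t+1}$ meet $A\cup B$: if one of them is missed, the deficit comes from $|X_{t+1}|+|Y_{t+1}|\ge(1-\alpha)n$; if both are hit, it anchors $X_{t+1}\subseteq A$ and $Y_{t+1}\subseteq B$ and uses the completeness of $G[X_i,Y_{t+1}]$ and $G[X_{t+1},Y_i]$ to exclude one of $X_i,Y_i$ per block. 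You instead split on whether the independent sets $X\cap S$ and $Y\cap S$ land in opposite parts of $H$ or the same part: the opposite-parts case gives the per-block exclusion of at least $m=\lfloor(\alpha n-(2k-1)t)/(2t)\rfloor$ vertices directly from the emptiness of $G[X_i,Y_i]$, with no appeal to $X_{t+1},Y_{t+1}$ or to the all-or-nothing observation, and the same-part case collapses to $|S|\le\max\{2m,|X|,|Y|\}+|Z|\le\frac{n}{2}+O(\sqrt{kn})$, which is far below $(1-\frac{\alpha}{4})n$. Both routes end with the same numerical check (in your formulation, $tm\ge\frac{\alpha}{4}n$ from $\alpha n\ge 36k$). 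Your version is slightly more economical and makes explicit that only conditions (iii), (iv) and the value $|Z|=(2k-1)t$ are used, so the additional edges carried by a maximal representative of $\mathcal{G}_{k,\alpha}(n)$ cannot interfere; the paper's version has the mild advantage of producing the explicit extremal configuration ($X_{t+1}\cup Y_{t+1}$ plus one class per block plus $Z$) along the way.
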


\begin{proof}
By Definition \ref{defn-1} (iii) and (iv), $G[X\cup Y]$ is a bipartite graph with vertex classes $X$ and $Y$. Since $G[X_i,Y_i]$ is empty, we have
\begin{align}\label{eq-4-1}
e(G[X\cup Y]) &= |X||Y|- \sum_{i=1}^t |X_i||Y_i|\nonumber\\
&\geq \left\lfloor\frac{ n-(2k-1)t}{2}\right\rfloor^2 - t \left\lfloor \frac{\alpha n-(2k-1)t}{2t}\right\rfloor^2\nonumber\\
&\geq \left(\frac{n-(2k-1)t-t}{2}\right)^2- t \left(\frac{\alpha n-(2k-1)t}{2t}\right)^2\nonumber\\
%&\textcolor{red}{= \frac{n^2+4k^2t^2-4ktn}{4} - t \frac{\alpha^2 n^2+(2k-1)^2t^2-2(2k-1)t\alpha n}{4t^2}}\\
%&= \frac{n^2}{4} +k^2t^2-ktn- \frac{\alpha^2 n^2}{4t}-\frac{(2k-1)^2t}{4}+\frac{(2k-1)\alpha n}{2}\nonumber\\
&\geq \frac{n^2}{4} +k^2t^2-ktn- \frac{\alpha^2 n^2}{4t}- k^2t +(k-1)\alpha n.
\end{align}
Substituting $t =\lceil \sqrt{\frac{\alpha n}{4k}} \rceil$ into \eqref{eq-4-1}, we arrive at

\begin{align*}
e(G[X\cup Y]) &\geq \frac{n^2}{4} +k^2\frac{\alpha n}{4k}-kn\left(\sqrt{\frac{\alpha n}{4k}}+1\right)- \frac{\alpha^2n^2}{4\sqrt{\frac{\alpha n}{4k}}} - k^2\left(\sqrt{\frac{\alpha n}{4k}}+1\right) +(k-1)\alpha n\\
%&\textcolor{red}{= 	\frac{n^2}{4} +\frac{k\alpha}{4}n-\frac{\sqrt{\alpha k}}{2}n^{\frac{3}{2}}-kn -\frac{\sqrt{k}\alpha^{\frac{3}{2}}}{2} -\frac{k^{\frac{3}{2}}\sqrt{\alpha n}}{2}+(k-1)\alpha n-k^2}\\
&= \frac{n^2}{4}-\left(\frac{\sqrt{\alpha k}}{2}+\frac{\alpha^{\frac{3}{2}}\sqrt{k}}{2}\right)n^{\frac{3}{2}}-\left(k-\frac{(5k-4)\alpha}{4}\right)n-\frac{k^{\frac{3}{2}}\sqrt{\alpha n}}{2}-k^2.
\end{align*}
Since $\alpha <1$ and $n\geq \frac{4k}{\alpha}$, we further obtain that
\begin{align*}
e(G[X\cup Y]) &\geq \frac{n^2}{4}-\sqrt{ k\alpha}n^{\frac{3}{2}}-kn-\frac{k^{\frac{3}{2}}\sqrt{\alpha n}}{2}+\left(\frac{(5k-4)\alpha}{4}n-k^2\right)\\
&\geq \frac{n^2}{4} - \sqrt{ k\alpha} n^{\frac{3}{2}}-2kn+\left(kn-\frac{k^{\frac{3}{2}}\sqrt{\alpha n}}{2}\right)\\
&\geq \frac{n^2}{4} - 2\sqrt{ k\alpha} n^{\frac{3}{2}}+\left(\sqrt{ k\alpha} n^{\frac{3}{2}}-2kn\right)\\
&\geq \frac{n^2}{4} - 2\sqrt{ k\alpha} n^{\frac{3}{2}}.\\
\end{align*}

In the following, we shall show that any induced complete bipartite subgraph of $G$ has at most $(1-\frac{\alpha}{4})n$ vertices. Assume that $H$ is a largest induced complete bipartite subgraph of $G$ with vertex classes $A$ and $B$. Note that each vertex of $X_i$ (or $Y_i$)
plays the same role in $G$. If there is a vertex in $X_i$ (or $Y_i$)  belongs to $V(H)$, then by the maximality of $H$, every vertex of $X_i$ (or $Y_i$) belongs to $V(H)$.
% if $(A\cup B)\cap X_i\neq \emptyset$ (or $(A\cup B)\cap Y_i\neq \emptyset$), then $X_i\subset (A\cup B)$ (or $Y_i\subset (A\cup B)$) by the maximality of $H$.

Suppose first that $X_{t+1}\cap( A\cup B)=\emptyset$ or $Y_{t+1}\cap( A\cup B)=\emptyset$, then
\begin{align}\label{ineq-4-2}
|A|+|B| &\leq n-\min \{|X_{t+1}|,|Y_{t+1}|\}\nonumber\\
&\leq n -\left(\frac{|X_{t+1}|+|Y_{t+1}|}{2}-1\right)\nonumber\\
&= n -\frac{1}{2}\left(|X_{t+1}|+|Y_{t+1}|\right)+1.
\end{align}
By  Definition \ref{defn-1} (i) and (ii),
\begin{align}\label{ineq-4-3}
|X_{t+1}|+|Y_{t+1}|&=n-\sum_{i=1}^t(|X_i|+|Y_i|)-|Z|\nonumber \\
 &= n-2t \left \lfloor \frac{\alpha n-(2k-1)t}{2t} \right\rfloor -(2k-1)t\nonumber\\
 &\geq (1-\alpha) n.
\end{align}
Combining \eqref{ineq-4-2} and \eqref{ineq-4-3}, we obtain that
\begin{align*}
|A|+|B| \leq n -\frac{(1-\alpha) n}{2} +1\leq \frac{3n}{4} +1\leq (1-\frac{\alpha}{4})n.
\end{align*}

Now consider that $X_{t+1}\cap( A\cup B)\neq \emptyset$ and $Y_{t+1}\cap( A\cup B)\neq\emptyset$, then $X_{t+1},Y_{t+1}\subset A\cup B$. Without loss of generality, we may assume that $X_{t+1}\subset A$ and $Y_{t+1}\subset B$.
Since $G[X_i,Y_i]$ is empty and both $G[X_i,Y_{t+1}]$ and $G[X_{t+1},Y_i]$ are complete bipartite, it follows that at most one of $X_i$ and $Y_i$ is in $A\cup B$. Thus
\begin{align*}
|(A\cup B)\cap (X\cup Y)|&\leq |X_{t+1}|+|Y_{t+1}| +t \left\lfloor\frac{\alpha n-(2k-1)t}{2t}\right\rfloor\\
%&=n-2t\left\lfloor \frac{\alpha n-(2k-1)t}{2t} \right\rfloor-(2k-1)t+t \left\lfloor\frac{\alpha n-(2k-1)t}{2t}\right\rfloor\\
&=n-t\left\lfloor \frac{\alpha n-(2k-1)t}{2t} \right\rfloor-(2k-1)t\\
&\leq \left(1-\frac{\alpha}{2}\right)n-\frac{2k-3}{2}t.
\end{align*}
Then
\begin{align}\label{eq-4-4}
|A\cup B| &\leq |(A\cup B)\cap (X\cup Y)|+|Z| \nonumber\\
&\leq \left(1-\frac{\alpha}{2}\right)n-\frac{2k-3}{2}t+(2k-1)t\nonumber\\
& = \left(1-\frac{\alpha}{2}\right)n+\frac{2k+1}{2}t.
\end{align}
Substituting $t =\lceil \sqrt{\frac{\alpha n}{4k}} \rceil$ into \eqref{eq-4-4} and  since $n\geq \frac{36k}{\alpha}$, we get
\begin{align*}
|A\cup B|& \leq \left(1-\frac{\alpha}{2}\right)n+\frac{2k+1}{2}\left( \sqrt{\frac{\alpha n}{4k}}+1\right)\\
&\leq \left(1-\frac{\alpha}{2}\right)n+(2k+1)\sqrt{\frac{\alpha n}{4k}} \\
&\leq \left(1-\frac{\alpha}{4}\right)n.\\
\end{align*}
This completes the proof.
\end{proof}

\noindent{\bf Remark.} Very recently, Gerbner \cite{gerbner} extends our result to 3-chromatic graphs with a color-critical edge. They also proposed two interesting conjectures on this topic.

%\noindent{\bf Acknowledgements.}

\end{document}